\theoremstyle{plain}
\newtheorem{theorem}{Theorem}
\newtheorem{lemma}{Lemma}
\newtheorem{corollary}{Corollary}
\newtheorem{proposition}{Proposition}
\theoremstyle{definition}
\newtheorem{definition}{Definition}
\newtheorem{example}{Example}
\theoremstyle{remark}
\newtheorem{remark}{Remark}
\begin{document}
\title
[A convex level set and the Monge-Amp\`ere current]
{On a convex level set of a plurisubharmonic function and the support of the Monge-Amp\`ere current}
\author{Yusaku Tiba}
\date{}
\maketitle
\begin{abstract}
In this paper, we study a geometric property of a continuous plurisubharmonic function which is 
a solution of the Monge-Amp\`ere equation and has a convex level set.  
To prove our main theorem, we show a minimum principle of a maximal plurisubharmonic function.  
By using our results and Lempert's results, 
we show a relation between the supports of the Monge-Amp\`ere currents and 
complex $k$-extreme points of closed balls for the Kobayashi distance in a bounded convex domain in 
$\mathbb{C}^{n}$.  
\end{abstract}


\section{Introduction}

Let $D$ be a domain in $\mathbb{C}^{n}$.  
Let $\mathrm{Psh}(D)$ be plurisubharmonic functions in $D$.  
In \cite{BT76} and \cite{BT82}, the Monge-Amp\`ere current $(dd^{c}u)^{k}$ ($k=1, \ldots, n$) is defined 
for $u \in \mathrm{Psh}(D) \cap C^{0}(D)$.  
The function $u \in \mathrm{Psh}(D) \cap C^{0}(D)$ such that $(dd^{c}u)^{n} = 0$ in $D$ can be  characterized as the maximal plurisubharmonic function (c.f.~\cite{Kli}).  
Here we say $u \in \mathrm{Psh}(D) \cap C^{0}(D)$ is maximal if 
for every relatively compact open subset $G$ of $D$, and for each upper semicontinuous function $v$ 
on $\overline{G}$ such that $v \in \mathrm{Psh}(G)$ and $v \leq u$ on $\partial G$, we have $v \leq u$ in $G$.  
If $u \in \mathrm{Psh}(D) \cap C^{3}(D)$ such that $(dd^{c}u)^{k} = 0$ in $D$, 
there exists a foliation on the interior of the support of $(dd^{c}u)^{k-1}$ by complex ($n-k+1$)-dimensional submanifolds such that the restriction of $u$ 
to any leaf of the foliation is pluriharmonic (see \cite{BK}).  
The geometric properties of a solution of $(dd^{c}u)^{k} = 0$ are more complicated 
in lower regularity.  
For example, Sibony showed that there exist a $C^{1, 1}$ plurisubharmonic function $u$ in the 
Euclidean unit ball $B$ in $\mathbb{C}^{2}$ and a point $p \in B$ such that $(dd^{c}u)^{2} = 0$ 
in $B$, $u$ attains its minimum at $p$ and there exists no holomorphic disk through $p$ on which $u$ is harmonic
(see \cite{Duj} for Sibony's example and more examples).  

In this paper, we study a geometric property of $u \in \mathrm{Psh}(D) \cap C^{0}(D)$ which is a solution of the 
Monge-Amp\`ere equation 
$(dd^{c}u)^{k} = 0$ ($k = 1, \ldots n-1$) and has a convex 
level set.  

Let 
\begin{align*} 
& \overline{B}_{u}(t) = \{z \in D; u(z) \leq t\},\\
& S_{u}(t) = \{z \in D; u(z)=t\}
\end{align*}
for $t \in \mathbb{R}$.  
Our main result is the following.  

\begin{theorem}\label{theorem:a}
Let $D$ be a domain in $\mathbb{C}^{n}$ and $u \in \mathrm{Psh}(D) \cap C^{0}(D)$.  
Let $k \in \{1, \ldots, n-1\}$ and $r \in \mathbb{R}$.  
Assume that $(dd^{c}u)^{k} = 0$ in $D$ and that $\overline{B}_{u}(r)$ is convex.  
Then any point of $S_{u}(r)$ lies in the relative interior of a complex $(n-k)$-dimensional convex set contained 
in $S_{u}(r)$.  
\end{theorem}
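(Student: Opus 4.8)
The plan is to reduce the statement to a local assertion through a single point and then to split the work into a ``soft'' convex-geometric part and a ``hard'' Monge--Amp\`ere part. Normalize $r=0$ and fix $p\in S_u(0)$. Since $(dd^cu)^{k}=0$ gives $(dd^cu)^{k+1}=dd^cu\wedge(dd^cu)^{k}=0$ and hence $(dd^cu)^{j}=0$ for all $j\ge k$, in particular $(dd^cu)^{n}=0$, the function $u$ is maximal. By the maximum principle for plurisubharmonic functions, either $u\equiv 0$ near $p$ (a trivial case, where any small complex $(n-k)$-disk through $p$ works) or $p$ lies on the topological boundary of the convex body $K:=\overline{B}_{u}(0)$, so that near $p$ the set $S_u(0)$ coincides with $\partial K$ and $u\equiv 0$ on $\partial K$. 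It therefore suffices to produce a complex affine subspace $L\ni p$ with $\dim_{\mathbb{C}}L=n-k$ such that a relative neighborhood of $p$ in $L$ lies in $\partial K$; that neighborhood is then a complex $(n-k)$-dimensional convex subset of $S_u(0)$ with $p$ in its relative interior.

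The convex-geometric part is a rigidity observation: any connected complex submanifold $A\subset\partial K$ is contained in a complex affine subspace that itself lies in $\partial K$. Indeed, if $\operatorname{Re}\ell\le 0$ on $K$ is a supporting functional at $\varphi(0)$ for an analytic disk $\varphi\colon\mathbb{D}\to A\subset\partial K$, then $\operatorname{Re}(\ell\circ\varphi)$ is harmonic, $\le 0$, and vanishes at the center, hence vanishes identically; since $\ell\circ\varphi$ is holomorphic with vanishing real part it is constant, so $\varphi(\mathbb{D})\subset\{\ell=\ell(\varphi(0))\}$. Running this over disks through every point of $A$ and over all supporting functionals, and recursing inside the complex hyperplanes so obtained, confines $A$ to a complex affine subspace contained in $\partial K$. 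Consequently it is enough to exhibit, through $p$, a complex-analytic germ contained in $S_u(0)$ of dimension at least $n-k$ with $p$ not isolated; the rigidity step then flattens it to the desired $L$, and one arranges the germ to be exactly $(n-k)$-dimensional so that it fills out a complex affine $(n-k)$-plane.

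The Monge--Amp\`ere part, which I expect to be the main obstacle, is the construction of this $(n-k)$-dimensional analytic germ in low regularity. In the $C^{3}$ case one would simply invoke \cite{BK}: the equation $(dd^cu)^{k}=0$ makes the complex Hessian of $u$ have rank at most $k-1$, its kernel integrates to a foliation by complex $(n-k+1)$-dimensional leaves on which $u$ is pluriharmonic, and intersecting a leaf with $\{u=0\}$ cuts down one complex dimension (a nonconstant pluriharmonic $h=\operatorname{Re}f$ on a leaf has $\{f=\text{const}\}\subset\{h=0\}$ of complex codimension $1$), producing exactly a complex $(n-k)$-dimensional piece of $S_u(0)$. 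Since $u$ is only continuous the Hessian, the foliation, and the leafwise pluriharmonicity are unavailable, and this is precisely where the minimum principle for maximal plurisubharmonic functions is meant to substitute: applied to a partial minimization of $u$ transverse to the expected leaf direction, it should yield a maximal function whose minimizing set carries a complex affine structure along which $u$ is constant, thereby furnishing the germ directly. The delicate points are to obtain the correct dimension $n-k$ rather than the kernel dimension $n-k+1$ (the ``$-1$'' being the single level-set constraint, which the convexity of $\overline{B}_{u}(0)$ must encode), and to see that the hypothesis $k\le n-1$ is genuinely used, since Sibony's example shows the conclusion fails for $k=n$. Once the germ is produced, the rigidity step and the dimension count combine to give $L$ and finish the proof.
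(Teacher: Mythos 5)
You correctly dispose of the trivial case where $p$ is interior to $K=\overline{B}_{u}(r)$, and your rigidity observation is close in spirit to the flattening the paper performs (though note your stronger claim that the flattened affine subspace ``itself lies in $\partial K$'' does not follow from the disk argument, which only confines the submanifold to a complex affine subspace). The genuine gap is that the entire analytic content of the theorem --- producing even one complex direction inside $S_{u}(r)$, let alone $n-k$ of them --- is left as an acknowledged hope. Your proposed mechanism, ``the minimum principle applied to a partial minimization of $u$ transverse to the expected leaf direction,'' fails as stated: an infimum of a plurisubharmonic function over some of its variables is not plurisubharmonic in general (Kiselman-type minimum principles require special invariance and pseudoconvexity hypotheses), and for $u$ merely continuous there is no leaf direction to minimize along in the first place; indeed the paper explicitly remarks that the existence of an $(n-k+1)$-dimensional submanifold through a point of $S_{u}(r)$ on which $u$ is pluriharmonic is \emph{unknown} in this regularity, so a Bedford--Kalka-style germ, to be cut down by the level-set constraint (your ``$-1$'' heuristic), cannot serve as the intermediate target.

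What the paper actually does at this step, and what is absent from your plan, is the following. First, it chooses a complex $k$-dimensional affine plane $M$ through $x$ \emph{tangent} to the convex body $\overline{B}_{u}(r)$; such an $M$ exists inside a supporting real hyperplane precisely because $k\leq n-1$, which answers your question about where that hypothesis enters. Second --- the key point --- a slicing argument (approximation by smooth psh $u_{i}$, Fubini against product test forms, and weak continuity of $z''\mapsto (dd^{c}u|_{M(z'')})^{k}$) shows that $(dd^{c}(u|_{M\cap D}))^{k}=0$: tangency plus slicing converts the degenerate equation $(dd^{c}u)^{k}=0$ in $\mathbb{C}^{n}$ into the \emph{full} Monge--Amp\`ere equation on the $k$-dimensional slice, where moreover $u|_{M}$ attains its minimum $r$ on the convex set $S_{u}(r)\cap M$; only then does the minimum principle (Theorem~2, itself proved via Jensen measures, Edwards' theorem, Duval--Sibony currents and a Hahn--Banach plus totally-real perturbation argument) apply, yielding a complex one-dimensional convex set through $x$ in $S_{u}(r)\cap M$. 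Third, the dimension $n-k$ is built \emph{upward} by induction on $n-k$: since $(dd^{c}u)^{k}=0$ forces $(dd^{c}u)^{k+1}=0$, the inductive case supplies a complex $(n-k-1)$-dimensional convex set $C_{1}\ni x$; choosing $M$ inside a tangent hyperplane with $M\cap C_{1}=\{x\}$ gives a transverse one-dimensional $C_{2}$, and $\mathrm{ch}(C_{1}\cup C_{2})\subset S_{u}(r)$ by the maximum principle along complex lines through $x$ in its relative interior. None of these three steps appears in your proposal, and without the second one the minimum principle has nothing to act on; so the proposal has a genuine gap at its central step.
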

Here a complex $(n-k)$-dimensional convex set is a convex set whose affine hull is a complex $(n-k)$-dimensional affine space.  
We do not know whether there exists a complex $(n-k+1)$-dimensional locally closed submanifold without boundary through 
a point of $S_{u}(r)$ on which $u$ is pluriharmonic.  
However, the converse holds (see Proposition~\ref{proposition:b}).  

To prove Theorem~\ref{theorem:a}, we show the following minimum principle of a maximal plurisubharmonic function.  
\begin{theorem}\label{theorem:b}
Let $D \subset \mathbb{C}^{n}$ be a domain and let $u \in \mathrm{Psh}(D) \cap C^{0}(D)$ be a non-negative 
maximal plurisubharmonic function, that is, $u \geq 0$ and $(dd^{c}u)^{n} = 0$ in $D$.  
Assume that $u^{-1}(0) \subset D$ is non-empty and convex.  
Then any point of $u^{-1}(0)$ lies in the relative interior of a complex one-dimensional convex set contained in $u^{-1}(0)$.  
\end{theorem}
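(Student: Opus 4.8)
The plan is to reduce the statement to a single coordinate–free claim about the convex set $K := u^{-1}(0)$: for every $p \in K$ there is a nonzero $v \in \mathbb{C}^{n}$ with $p + \zeta v \in K$ for all small $\zeta \in \mathbb{C}$, i.e. a complex direction of $K$ at $p$, or equivalently the tangent cone $T_{p}K$ (the closed convex cone of feasible directions of $K$ at $p$) contains a complex line $\mathbb{C}v = \mathbb{R}v + \mathbb{R}(iv)$ in its relative interior. Granting this, the disk $\{p + \zeta v : |\zeta| \le \varepsilon\}$ is a complex one-dimensional convex subset of $K$ having $p$ in its relative interior, which is exactly the assertion. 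This reformulation handles relative interior points of $K$ (where $T_{p}K$ is the full direction space $L$ of $\operatorname{aff}(K)$, so the claim becomes $L \cap iL \neq 0$) and relative boundary points on the same footing. After translating so that $p = 0$ and shrinking to a ball $B = B(0,\delta) \Subset D$, I would argue by contradiction and assume that $K$ has \emph{no} complex direction at $0$.

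The engine is the characterization of maximality by the comparison principle: since $(dd^{c}u)^{n} = 0$, $u$ is the Perron--Bremermann envelope on $B$, so by the comparison principle of \cite{BT82}, \cite{Kli} it is impossible to produce $\rho \in \mathrm{Psh}(B) \cap C^{0}(\overline{B})$ with $\rho \le u$ on $\partial B$ and $\rho(0) > u(0) = 0$. The strategy is therefore to manufacture exactly such a competitor $\rho$ from the hypothesis that $T_{0}K$ has no complex line, thereby contradicting maximality. The geometric input is that a closed convex cone containing no complex line is, after a complex-linear change of coordinates, supported against a maximal totally real model: one can arrange that the directions of $K$ near $0$ are squeezed against a totally real subspace $\Lambda \cong \mathbb{R}^{m}$ with $m \le n$. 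On such a model the functions $\rho_{C}(z) = \varepsilon - |x|^{2} + C\,(\text{transverse terms})$, where $x$ denotes the coordinate along $\Lambda$, are plurisubharmonic for $C$ large, vanish to second order along $\Lambda$, and have a strict maximum at $0$ transverse to $\Lambda$; these are the natural candidates for $\rho$.

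The comparison to run is then $\rho_{C}(0) = \varepsilon > 0 = u(0)$, while on $\partial B$ one needs $\rho_{C} \le u$. Along $\partial B \cap \Lambda$ this is automatic, since there $\rho_{C} = \varepsilon - |x|^{2} \le 0 \le u$ once $\delta^{2} \ge \varepsilon$; the difficulty is transverse to $\Lambda$, where $\rho_{C}$ carries the positive term $C\,(\text{transverse})$ and one must know that $u$ grows at a definite rate, matching the barrier, as one moves off $\Lambda$. Establishing this transverse lower bound for $u$ is, I expect, the main obstacle. Since $u$ is only continuous, Hessian or $C^{2}$ reasoning (as in the smooth foliation theorem of \cite{BK}) is unavailable, and Sibony's example shows that no such bound can hold without the convexity hypothesis. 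This is precisely where convexity of $K$ must enter: the supporting real hyperplanes of the convex cone $T_{0}K$ against $\Lambda$ should provide a plurisubharmonic lower barrier for $u$ near $0$ that forces the required transverse growth.

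Equivalently, and perhaps more robustly, the whole contradiction can be phrased as a mass statement: the absence of a complex direction should force $\int_{B}(dd^{c}u)^{n} > 0$, against $(dd^{c}u)^{n} = 0$. Indeed, once a valid barrier $\rho_{C} \le u$ on $\partial B$ is in place, the comparison principle converts the strict positivity of $\int_{B}(dd^{c}\rho_{C})^{n}$ (as $\rho_{C}$ is strictly plurisubharmonic transverse to $\Lambda$) into positivity of the Monge--Amp\`ere mass of $u$ on the contact set. Thus the heart of the proof is the construction, from the convex geometry of $K$ alone, of a plurisubharmonic barrier compelling enough transverse growth of $u$ to drive this comparison; the reduction to the tangent cone, the passage from relative interior to relative boundary points, and the final invocation of the comparison principle are comparatively routine once this barrier is secured.
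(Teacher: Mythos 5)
Your proposal has a genuine gap --- in fact two. First, the geometric normalization is false: a closed convex cone containing no complex line need not be squeezed against a totally real subspace. The circular cone $\{z \in \mathbb{C}^{n} : \mathrm{Re}\, z_{1} \geq \alpha \|z\|\}$ with $0 < \alpha < 1$ contains no real line whatsoever (hence no complex line), yet it has nonempty interior and is contained in no proper real subspace. So at a vertex-type point of $K = u^{-1}(0)$ there may be no totally real $\Lambda$ at all, and the barrier geometry you set up never gets started; the dichotomy ``totally real versus containing a complex direction'' applies to affine subspaces --- in the paper, to the affine hull of a suitable convex set having $x$ in its relative interior --- not to the tangent cone of $K$. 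Second, the step you yourself flag as the main obstacle, a definite transverse growth rate for $u$ off $\Lambda$, is not derivable from the hypotheses: convexity is assumed only for the single level set $u^{-1}(0)$, the sublevel sets $\{u \leq t\}$ for $t > 0$ need not be convex, and continuity imposes no modulus on how slowly $u$ may rise off its zero set. All that compactness yields is a constant bound $u \geq c(\eta) > 0$ on the part of $\partial B$ at distance $\geq \eta$ from $K$, with $c(\eta)$ not under your control; your competitor $\rho_{C} = \varepsilon - |x|^{2} + C(\text{transverse})$, with $C$ forced large for plurisubharmonicity, is largest exactly in the transverse region where only this uncontrolled constant is available, so the boundary inequality $\rho_{C} \leq u$ cannot be verified. (In the genuinely totally real situation the remedy is not a growth estimate for $u$ but a rescaling of the competitor: pluriharmonic quadrics such as $\varepsilon + s \sum_{j} \mathrm{Re}(w_{j}^{2})$ with $s, \varepsilon$ small beat the crude bound $c(\eta)$ --- but this does not repair the first gap.)

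The paper avoids both problems by dualizing. Its Lemma~2 (via Edwards' theorem together with the Duval--Sibony construction) characterizes maximality by the existence, at every point, of a Jensen measure $\mu \neq \delta_{x}$ with $u(x) = \int u\, d\mu$. At $x \in u^{-1}(0)$ the conditions $u \geq 0$ and $\int u\, d\mu = 0$ force $|\mu| \subset u^{-1}(0)$, and convexity enters only to conclude $\mathrm{ch}(|\mu|) \subset u^{-1}(0)$. The positive $(1,1)$-current $T$ with $dd^{c}T = \mu - \delta_{x}$ is then paired with explicit test functions: with real-linear $L$ (Hahn--Banach) to place $x$ in the relative interior of $\mathrm{ch}(|\mu|)$, and with the strictly plurisubharmonic quadric $g = \sum_{j} (\mathrm{Re}\, w_{j})^{2}$ to rule out $\mathrm{ch}(|\mu|)$ being totally real. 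Everything is tested on the zero set itself, so no growth of $u$ off $K$ is ever needed. Your instinct that a totally real model plus a plurisubharmonic quadric is the crux matches the paper's final step, but the carrier of that argument must be a measure and current supported in $u^{-1}(0)$, not an ambient comparison-principle barrier.
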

The condition that $u^{-1}(0)$ is convex can not be removed because of Sibony's example.  

Assume that $D \subset \mathbb{C}^{n}$ is a bounded convex domain.  
Lempert~\cite{Lem81} showed the relation between 
the Kobayashi distance and the pluricomplex Green function in $D$.  
Recall that the pluricomplex Green function of $D$ with logarithmic pole at $x \in D$ is defined as 
\[
K_{D, x}(z) = \sup\{u(z) ; u \in \mathrm{Psh}(D), u < 0, \limsup _{w \to x} u(w) - \log \|w-x\| < \infty \}  
\]
and $K_{D, x}$ is the unique solution for the following homogeneous Monge-Amp\`ere equation: 
\[
\begin{cases}
u \in \mathrm{Psh}(D) \cap L^{\infty}_{loc}(\overline{D} \setminus \{x\}), \\
(dd^{c}u)^{n} \equiv 0 \quad \text{in} \quad D \setminus \{x\}, \\  
u(z) - \log \|z-x\| = O(1) \quad \text{for} \quad z \to x, \\
\lim_{z \to p} u(z) = 0 \quad \text{for all} \quad p \in \partial D
\end{cases}
\]
(see \cite{Lem81}, \cite{Dem87}).  
Let $k_{D}$ be the Kobayashi distance on $D$.  
Lempert~\cite{Lem81} showed that 
\[
K_{D, x}(y) = \log \tanh k_{D}(x, y) \quad \text{for} \quad x, y \in D, x \neq y.  
\]  
Hence the balls centered at $x$ for the Kobayashi distance of $D$ coincide with the level sets of $K_{D, x}$.  
It was also shown in \cite{Lem81} that the balls for the Kobayashi distance of $D$ are convex.  
If $D$ is smoothly bounded and strictly convex, then 
$K_{D, x}$ is a smooth function in $D \setminus \{x\}$ such that $(dd^{c}K_{D, x})^{n-1} \neq 0$ 
in $D \setminus \{x\}$ and 
the balls centered at $x$ for the Kobayashi distance are strictly convex
(see \cite{Lem81} and \cite{Pat84}).  
However, if $D$ is a general bounded convex domain, 
balls for the Kobayashi distance are not always strictly (pseudo)convex and $K_{D, x}$ might be pluriharmonic in 
an open subset of $D$.  
In Section~\ref{section:3}, we show a relation between the supports of Monge-Amp\`ere currents 
$(dd^{c}K_{D, x})^{k}, (dd^{c}e^{K_{D, x}})^{k}$ and complex $k$-extreme points of balls centered at $x$ for the Kobayashi distance by using our results (Theorem~\ref{theorem:a} and Proposition~\ref{proposition:b}) and Lempert's results~\cite{Lem81} 
(see Section~\ref{section:3} for the definition of complex $k$-extreme points).  

\medskip

{\it Acknowledgment.}
The author would like to express his gratitude to Professor Junjiro Noguchi and Professor Katsutoshi Yamanoi 
for valuable advices and warm encouragements.  
Their suggestions improve the exposition greatly.  
The author also thanks Professor Shin-ichi Matsumura for valuable discussions.  
The author is supported by the Grant-in-Aid for Scientific Research (KAKENHI No. 25-902) 
and the Grant-in-Aid for JSPS fellows.  

\section{Proof of Theorem~\ref{theorem:a} and Theorem~\ref{theorem:b}}

Before giving the proof, we recall a notion of Jensen measure.  
The reader will find a much more study about Jensen measure in \cite{Ceg88}, \cite{Wik}.  
Let $\Omega \subset \mathbb{C}^{n}$ be a bounded domain and let $\mu$ be a positive, 
regular Borel measure on $\overline{\Omega}$.  
We say that $\mu$ is a Jensen measure with barycenter $z \in \overline{\Omega}$ for 
continuous plurisubharmonic functions, if $u(z) \leq \int_{\overline{\Omega}} u d\mu$ 
for every function $u \in \mathrm{Psh}(\Omega) \cap C^{0}(\overline{\Omega})$.  
We denote by $\mathcal{J}_{z}^{c}$ the set of Jensen measures for continuous 
plurisubharmonic functions having barycenter $z$.  
The following theorem is a consequence of Edwards' theorem.  
\begin{theorem}[\cite{Edw}]\label{theorem:2}
Let $\Omega \subset \mathbb{C}^{n}$ be a bounded domain, and let $v$ be a real valued 
lower semicontinuous function on $\overline{\Omega}$.  Then, for every $z \in \overline{\Omega}$, 
\[
\sup\{u(z); u \in \mathrm{Psh}(\Omega) \cap C^{0}(\overline{\Omega}),\, u \leq v 
\; \text{in} \; \; \overline{\Omega}\} 
= \inf \left\{\int_{\overline{\Omega}} v d\mu; \mu \in \mathcal{J}_{z}^{c}\right\}.  
\]
\end{theorem}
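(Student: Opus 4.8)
The plan is to obtain this identity as a direct instance of the abstract duality theorem of Edwards \cite{Edw}. Since $\Omega$ is bounded, $\overline{\Omega}$ is a compact metric space, and I first observe that the family $\mathcal{F} := \mathrm{Psh}(\Omega) \cap C^{0}(\overline{\Omega})$ is a convex cone inside $C^{0}(\overline{\Omega})$ that contains the real constants: sums and positive multiples of plurisubharmonic functions are plurisubharmonic, and every constant is pluriharmonic and hence lies in $\mathcal{F}$. With this cone the Jensen condition $u(z) \le \int_{\overline{\Omega}} u \, d\mu$ for all $u \in \mathcal{F}$ is precisely the defining relation of $\mathcal{J}_z^c$, and applying the condition to the constants $\pm 1 \in \mathcal{F}$ forces $\mu(\overline{\Omega}) = 1$, so every element of $\mathcal{J}_z^c$ is automatically a probability measure.

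The easy inequality ``$\le$'' is immediate and I would record it first. If $u \in \mathcal{F}$ satisfies $u \le v$ on $\overline{\Omega}$ and $\mu \in \mathcal{J}_z^c$, then
\[
u(z) \le \int_{\overline{\Omega}} u \, d\mu \le \int_{\overline{\Omega}} v \, d\mu,
\]
and taking the supremum over such $u$ and the infimum over such $\mu$ yields that the left-hand side of the claimed identity is at most the right-hand side.

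The substance of the statement is the reverse inequality ``$\ge$'', and this is where Edwards' theorem (equivalently, the Hahn-Banach separation argument underlying it) enters. Here the plan is to consider the upper envelope
\[
p(f) = \sup\{u(z) ; u \in \mathcal{F}, \, u \le f \ \text{on} \ \overline{\Omega}\}, \qquad f \in C^{0}(\overline{\Omega}),
\]
which is finite (one has $\min_{\overline{\Omega}} f \le p(f) \le f(z)$, the upper bound coming from $u \le f$ pointwise and the lower bound from taking $u$ constant), positively homogeneous, and superadditive because $\mathcal{F}$ is a convex cone. A Hahn-Banach argument then produces a linear functional on $C^{0}(\overline{\Omega})$ dominating $p$ and agreeing with it at a prescribed continuous function; by the Riesz representation theorem this functional is integration against a positive measure, and the domination by $p$ together with $p(u) \ge u(z)$ for $u \in \mathcal{F}$ shows that the measure lies in $\mathcal{J}_z^c$ and realizes the value of the envelope.

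The main obstacle is twofold. First, one must check that the measure produced by the separation is simultaneously positive, a probability measure with the correct barycenter $z$, and Jensen for the entire cone $\mathcal{F}$, all while controlling $\int_{\overline{\Omega}} v \, d\mu$; this is exactly the delicate bookkeeping that Edwards' theorem packages, and I would lean on \cite{Edw} for it rather than reprove it. Second, the hypothesis allows $v$ to be merely lower semicontinuous rather than continuous, so $v$ itself is not an admissible test function for $p$. I would circumvent this by approximating $v$ from below by an increasing sequence of continuous functions $\phi_j \nearrow v$, applying the continuous case to each $\phi_j$, and passing to the limit: on the envelope side this uses monotonicity of $p$, and on the measure side the identity $\int_{\overline{\Omega}} v \, d\mu = \sup_j \int_{\overline{\Omega}} \phi_j \, d\mu$ (monotone convergence) transfers the infimum correctly. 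Combining the two inequalities then gives the stated equality.
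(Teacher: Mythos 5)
The paper offers no proof of this statement at all---it is quoted as a direct consequence of Edwards' duality theorem \cite{Edw}---and your proposal is precisely that standard derivation (verify that $\mathrm{Psh}(\Omega) \cap C^{0}(\overline{\Omega})$ is a convex cone containing the constants, note the easy inequality, delegate the Hahn--Banach/Riesz bookkeeping to \cite{Edw}), so the approaches coincide. One remark: Edwards' theorem as cited already applies to lower semicontinuous $v$ directly (the title of \cite{Edw} concerns exactly such functions), so your approximation step $\phi_{j} \nearrow v$ is unnecessary; and if you do include it, be aware that monotone convergence alone only gives the trivial direction of the interchange $\inf_{\mu} \sup_{j} \int \phi_{j}\, d\mu \leq \sup_{j} \inf_{\mu} \int \phi_{j}\, d\mu$, the substantive direction requiring the weak-star compactness of $\mathcal{J}_{z}^{c}$ (extract a weak-star limit of near-minimizers $\mu_{j}$ and apply monotone convergence to the limit measure), a compactness fact the paper itself invokes in the proof of Lemma~\ref{lemma:2}.
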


We first prove the following lemma.  

\begin{lemma}\label{lemma:2}
Let $D$ be a domain in $\mathbb{C}^{n}$.  
Let $u \in \mathrm{Psh}(D) \cap C^{0}(D)$.  
Then $(dd^{c}u)^{n} = 0$ in $D$ if and only if, 
for any $z \in D$ and any connected open neighborhood $U \subset \subset D$ of $z$, there exists 
a Jensen measure $\mu \in \mathcal{J}_{z}^{c}$ on $\overline{U}$ such that 
$u(z) = \int u d\mu$ and $\mu \neq \delta_{z}$ where $\delta_{z}$ is the Dirac mass at $z$.  
\end{lemma}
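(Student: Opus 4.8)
The plan is to prove both implications with Edwards' theorem (Theorem~\ref{theorem:2}) and the weak-$*$ compactness of the space of Jensen measures, using throughout that $(dd^{c}u)^{n}=0$ is equivalent to $u$ being maximal. Two generalities will be used repeatedly. First, every $\mu\in\mathcal{J}_{z}^{c}$ is a probability measure on $\overline{U}$ (test with the constants $\pm1$), and $\mathcal{J}_{z}^{c}$ is weak-$*$ compact, since each defining inequality $w(z)\le\int w\,d\mu$ is a closed condition. Second, every such $\mu$ has complex barycenter $z$: testing with the pluriharmonic functions $\pm\operatorname{Re}(a\cdot\zeta)$ yields $\int\zeta\,d\mu(\zeta)=z$ for all $a\in\mathbb{C}^{n}$.

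For the forward implication I would assume $u$ maximal and reduce to a ball $B=B(z,\rho)\subset\subset U$, since a Jensen measure carried by $\overline{B}$ restricts test functions and thus defines one on $\overline{U}$. After adding a constant I may assume $u\ge1$ on $\overline{B}$, which affects neither maximality nor the desired equality $\int u\,d\mu=u(z)$. For $N>\max_{\partial B}u$ let $v_{N}$ equal $u$ on $\partial B$ and $N$ on $B$; this is lower semicontinuous, and by the maximum principle the constraint $w\le v_{N}$ is, for psh $w$, the same as $w\le u$ on $\partial B$. Hence the left side of Edwards' identity equals $\sup\{w(z):w\in\mathrm{Psh}(B)\cap C^{0}(\overline{B}),\ w\le u\text{ on }\partial B\}$, which is $u(z)$: the competitor $w=u$ gives $u(z)$, while maximality forces $w\le u$ in $B$ for every competitor. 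Edwards' theorem then gives $\inf\{\int v_{N}\,d\mu:\mu\in\mathcal{J}_{z}^{c}\}=u(z)$, and this infimum is attained (a lower semicontinuous functional on a weak-$*$ compact set) at some $\mu_{N}$ with $\int_{\partial B}u\,d\mu_{N}+N\mu_{N}(B)=u(z)$. Since $u\ge0$ this forces $\mu_{N}(B)\le u(z)/N\to0$, so a weak-$*$ cluster point $\mu_{\infty}\in\mathcal{J}_{z}^{c}$ is carried by $\partial B$. Finally $\int_{\partial B}u\,d\mu_{N}=u(z)-N\mu_{N}(B)\le u(z)$ and $\int_{B}u\,d\mu_{N}\to0$, so in the limit $\int u\,d\mu_{\infty}\le u(z)$, while the Jensen inequality gives the reverse; hence $\int u\,d\mu_{\infty}=u(z)$. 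As $z$ is the center of $B$ and $\mu_{\infty}$ lives on $\partial B$, we have $\mu_{\infty}\neq\delta_{z}$.

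For the converse I would argue by contradiction: suppose the Jensen condition holds but $u$ is not maximal, so there are $G\subset\subset D$ and $v$, upper semicontinuous on $\overline{G}$ and psh in $G$, with $v\le u$ on $\partial G$ and $M:=\max_{\overline{G}}(v-u)>0$. Then $A=\{\zeta\in\overline{G}:v(\zeta)-u(\zeta)=M\}$ is a nonempty compact subset of $G$. Let $z^{*}\in A$ maximize $|\zeta|^{2}$ over $A$ and apply the hypothesis at $z^{*}$ with a small ball $U\ni z^{*}$, $U\subset\subset G$, obtaining $\mu\in\mathcal{J}_{z^{*}}^{c}$ with $\mu\neq\delta_{z^{*}}$ and $\int u\,d\mu=u(z^{*})$. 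Regularizing $v$ on $\overline{U}$ by a decreasing sequence of continuous psh functions yields the Jensen inequality $v(z^{*})\le\int v\,d\mu$ for the upper semicontinuous $v$, whence $M=v(z^{*})-u(z^{*})\le\int(v-u)\,d\mu\le M$; thus $v-u=M$ holds $\mu$-almost everywhere and $\operatorname{supp}\mu\subset A$. Using the barycenter property, $\int|\zeta-z^{*}|^{2}\,d\mu=\int|\zeta|^{2}\,d\mu-|z^{*}|^{2}\le0$, because $|\zeta|^{2}\le|z^{*}|^{2}$ on $A\supset\operatorname{supp}\mu$; so the variance of $\mu$ vanishes and $\mu=\delta_{z^{*}}$, contradicting $\mu\neq\delta_{z^{*}}$.

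The step I expect to be the main obstacle is the forward implication. Applying Edwards' theorem to $u$ itself only reproves the \emph{always}-true identity $u(z)=\inf_{\mu}\int u\,d\mu$, with $\delta_{z}$ a trivial minimizer, so maximality must be converted into the \emph{existence of a nontrivial} minimizer; the device of sending the boundary datum to $+\infty$ in the interior (the truncations $v_{N}$) is what pushes the optimal measure onto $\partial B$ and away from $\delta_{z}$. The delicate points there are the attainment of the infimum for the merely lower semicontinuous integrand and the control of the interior mass in the weak-$*$ limit. In the converse the only subtlety is the Jensen inequality for the upper semicontinuous competitor $v$, for which I would take $U$ to be a ball so that a standard decreasing regularization applies.
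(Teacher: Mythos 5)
Your proof is correct, and both halves take a genuinely different route from the paper's. For the necessity, the paper applies Edwards' theorem (Theorem~\ref{theorem:2}) not to a penalized datum but to the harmonic majorant $h$ of $u$ on the ball (using $u=v$, the Perron--Bremermann envelope, by uniqueness from Bedford--Taylor), obtains a minimizing $\mu$ with $u(x)=\int u\,d\mu$, and must then handle the degenerate case $\mu=\delta_{x}$ separately: there the strong maximum principle applied to $u-h$ forces $u\equiv h$, so $u$ is harmonic and the rotation-invariant probability measure on the sphere serves as the nontrivial Jensen measure. Your truncation device $v_{N}$ (boundary datum $u$ on $\partial B$, value $N$ inside) avoids this dichotomy entirely, since the penalization drives all the minimizing mass to $\partial B$ and the weak-$*$ limit is automatically distinct from $\delta_{z}$; the price is the lsc-attainment and portmanteau arguments, which you supply correctly, along with the needed (and correct) remark that a Jensen measure on $\overline{B}$ is also one on $\overline{U}$. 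For the sufficiency, the paper invokes Proposition~4.4 of Duval--Sibony to produce a positive $(1,1)$-current $T\neq 0$ with compact support and $dd^{c}T=\mu-\delta_{y}$, then contradicts $0<\langle T, dd^{c}\psi\rangle=\int\psi\,d\mu-\psi(y)$ for $\psi(z)=|z-x|^{2}$ against the choice of $y\in A$ closest to $\partial\overline{B(x)}$ (equivalently, maximizing $\psi$ on the contact set $A$). Your variance argument proves exactly this positivity by hand: since every $\mu\in\mathcal{J}_{z^{*}}^{c}$ has barycenter $z^{*}$ (test with $\pm\operatorname{Re}(a\cdot\zeta)$), one has $\int|\zeta|^{2}\,d\mu-|z^{*}|^{2}=\int|\zeta-z^{*}|^{2}\,d\mu$, which vanishes only for $\mu=\delta_{z^{*}}$ --- so the current-theoretic machinery is eliminated in favor of an elementary computation. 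The trade-off is that you conclude via the standard equivalence between maximality and $(dd^{c}u)^{n}=0$ (which the paper itself cites to Klimek in the introduction, so this is legitimate), whereas the paper argues locally through $u=v$ on balls and $(dd^{c}v)^{n}=0$, a marginally more self-contained dependency; your treatment of the merely upper semicontinuous competitor $v$ by decreasing regularization on $U\subset\subset G$, and your identification of $\operatorname{supp}\mu\subset A$ from $\int(v-u)\,d\mu=M$, are both sound.
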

\begin{proof}
Let $x \in D$ and take the Euclidean open ball $B(x) \subset \subset D$ of a small radius centered at $x$.  
Let 
\[
v(z) = \sup\{\phi(z); \phi \in \mathrm{Psh}(B(x)) \cap C^{0}(\overline{B(x)}), \phi 
\leq u \; \text{on} \; 
\partial \overline{B(x)}\}
\] 
for $z \in \overline{B(x)}$.  
Then $v \in \mathrm{Psh}(B(x)) \cap C^{0}(\overline{B(x)})$ and $v$ is the unique solution of the Monge-Amp\`ere equation $(dd^{c}v)^{n} = 0$ in $B(x)$ such that 
$v = u$ on $\partial \overline{B(x)}$ (see \cite{BT76}).  

Assume that $(dd^{c}u)^{n} = 0$ in $D$.  
Then $u = v$ in $\overline{B(x)}$.  
Take the continuous function $h$ in $\overline{B(x)}$ which is harmonic in $B(x)$ and $h = u$ on $\partial \overline{B(x)}$.  
Then 
\[
u(z) = v(z) = \sup \{\phi(z); \phi \in \mathrm{Psh}(B(x)) \cap C^{0}(\overline{B(x)}), \phi \leq h \; \text{in} 
\; \overline{B(x)}\}.  
\]
By Theorem~\ref{theorem:2}, 
\[
u(z) =  \inf \left\{\int_{\overline{B(x)}} h d\mu; \mu \in \mathcal{J}_{z}^{c}\right\}
\]
where $\mathcal{J}_{z}^{c}$ is the set of Jensen measures for continuous plurisubharmonic functions having 
barycenter $z$ in $\overline{B(x)}$.  
For any $z$, $\mathcal{J}_{z}^{c}$ is a compact set in the weak topology and 
there exists $\mu \in \mathcal{J}_{x}^{c}$ such that $u(x) = \int_{\overline{B(x)}}h d\mu$.  
Since $u(x) \leq \int_{\overline{B(x)}}u d\mu \leq \int_{\overline{B(x)}}h d\mu$, 
we have $u(x) = \int_{\overline{B(x)}}u d\mu$.  
If $\mu = \delta_{x}$, $u(x) = h(x)$ and $u = h$ in $\overline{B(x)}$ by the strong maximum principle of a subharmonic function 
$u - h$.  
Then $u$ is harmonic in $B(x)$.  
Let $\tau$ be the probability measure invariant under the rotation on $\partial\overline{B(x)}$.  
Then $\tau \in \mathcal{J}_{x}^{c}$ and $u(x) = \int u d\tau$.   
We complete the proof of the necessity.    

We next show the sufficiency.  
Assume that $u \not \equiv v$ in $B(x)$.  
Then 
\[
m = \sup_{z \in \overline{B(x)}}\{v(z)- u(z)\}
\]
is positive.  
Let $A = \{z \in \overline{B(x)}; v(z) -u(z) = m\}$.  
Let $d$ be the Euclidean distance on $\mathbb{C}^{n}$
and let $s = d(A, \partial \overline{B(x)}) = \inf \{d(z, w); z \in A, w \in \partial \overline{B(x)}\}$.  
Since $u = v$ on $\partial \overline{B(x)}$, $s> 0$.  
Take $y \in A$ such that $d(y, \partial \overline{B(x)}) = s$.  
By the hypothesis, there exists a Jensen measure $\mu \in \mathcal{J}_{y}^{c}$ on $\overline{B(x)}$ such that 
$u(y) = \int u d\mu$ and $\mu \neq \delta_{y}$.  
Then 
$m \geq \int (v-u) d\mu \geq v(y) - u(y) = m$.  
Hence the support of $\mu$ is  contained in $A$.  
By Proposition~4.4 of \cite{DS}, 
there exists a positive current $T \neq 0$ of bidimension ($1, 1$) with compact support such that 
$dd^{c}T = \mu - \delta_{y}$.  
We define $\psi(z) = |z_{1}-x_{1}|^{2} + \cdots + |z_{n}-x_{n}|^{2}$.  
It follows that 
\[
0 < \langle dd^{c}T, \psi \rangle = \int \psi d\mu - \psi(y).  
\]
However, the right hand side of the above equation is non-positive by the choice of $y$ 
and this is a contradiction.  
Therefore $u = v$ in $B(x)$ and $(dd^{c}u)^{n} = 0$.  
\end{proof}

\begin{proof}[Proof of Theorem~\ref{theorem:b}]
Let $x \in u^{-1}(0)$.  
By Lemma~\ref{lemma:2}, 
there exists $\mu \in \mathcal{J}_{x}^{c}$ on a small neighborhood of $x$ such that $u(x) = \int u d\mu$ and $\mu \neq \delta_{x}$.  
Since $u \geq u(x) = 0$, 
we have $u = 0$ on $|\mu|$ where $|\mu|$ is the support of $\mu$ and $|\mu| \subset u^{-1}(0)$.  
Let $\mathrm{ch}(|\mu|)$ be the convex hull of $|\mu|$.  
Since $u^{-1}(0)$ is convex, 
$\mathrm{ch}(|\mu|) \subset u^{-1}(0)$.  
By Proposition~4.4 of \cite{DS}, 
there exists a positive current $T \neq 0$ of bidimension ($1, 1$) with compact support in $D$ such that 
$dd^{c} T = \mu - \delta_{x}$.  

Assume that $x$ does not lie in the relative interior of $\mathrm{ch}(|\mu|)$.  
The Hahn-Banach Theorem implies that there exist a linear function $L: \mathbb{C}^{n} \to \mathbb{R}$ 
and $t \in \mathbb{R}$ such that 
$L > t$ in the relative interior of $\mathrm{ch}(|\mu|)$ and $L(x) \leq t$.  
Then $0 = \langle dd^{c}T, L \rangle = \int L d\mu - L(x)> t- t = 0$ and this is a contradiction.  
Therefore $x$ lies in the relative interior of $\mathrm{ch}(|\mu|)$.  

We show that $\mathrm{ch}(|\mu|)$ contains a complex one-dimensional convex set. 
Assume that $\mathrm{ch}(|\mu|)$ is contained in a totally real convex set.  
There exists a holomorphic affine coordinate $(w_{1}, \ldots, w_{n})$ such that 
$\mathrm{ch}(|\mu|) \subset \{w \in \mathbb{C}^{n}; \mathrm{Re}\, w_{1} = \cdots = \mathrm{Re}\,w_{n} = 0\}$.   
Define a non-negative strictly plurisubharmonic function $g = (\mathrm{Re}\,w_{1})^{2} + \cdots + (\mathrm{Re}\,w_{k})^{2}$.  
(This is the special case of Lemma~1.2 of \cite{HW}).  
Then 
$0 < \langle T, dd^{c} g \rangle = \int g d\mu - g(x)$ since $T \neq 0$.  
However, the right hand side of the above equation is equal to $0$ (note that $x \in \mathrm{ch}(|\mu|)$).  
This is a contradiction.  
Hence $x$ lies in the relative interior of a complex one-dimensional convex set contained in $u^{-1}(0)$.  
\end{proof}

\begin{remark}
In the above proof, 
let $|T|$ be the support of $T$ and let $\mathrm{ch}(|T|)$ be the convex hull of $|T|$.  
Then $\mathrm{ch}(|\mu|) = \mathrm{ch}(|T|)$ since 
$|T| \subset \widehat{|\mu|}_{D} = \{z \in D; |f(z)| \leq \sup_{|\mu|}|f| \, \, \text{for any} \, \, f \in \mathcal{O}(D)\}$
(see \cite{Hor}, Proposition~4.3 of \cite{DS}) 
and 
\[
\mathrm{ch}(|\mu|) \subset \mathrm{ch}(|T|) \subset \mathrm{ch}(\widehat{|\mu|}_{D}) = 
\mathrm{ch}(|\mu|).  
\]
Assume that $u \in C^{2}(D)$.  
Then $\langle T, dd^{c}u \rangle = 0$.  
Conversely, let $S \neq 0$ be a positive current of bidimension $(1, 1)$ with compact support in $D$ such that 
$dd^{c}S + \delta_{x}$ is a positive measure and $\langle S, dd^{c}u \rangle = 0$.  
Then the convex hull of the support of $S$ contains the required complex one-dimensional convex set.  
\end{remark}

We need the following lemma to prove Theorem~\ref{theorem:a}.  

\begin{lemma}\label{lemma:1}
Under the same hypotheses as in Theorem~\ref{theorem:a} let $x \in S_{u}(r)$ which is not contained in 
the interior of $\overline{B}_{u}(r)$.  
Let $M \subset \mathbb{C}^{n}$ be a complex $k$-dimensional affine subspace through $x$ 
such that $M$ is tangent to $\overline{B}_{u}(r)$, 
that is, the intersection of $M$ and the interior of $\overline{B}_{u}(r)$ is empty.  
Then $x$ lies in the relative interior of a complex one-dimensional convex set contained in 
$S_{u}(r) \cap M$.  
\end{lemma}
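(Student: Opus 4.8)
The plan is to reduce the statement to Theorem~\ref{theorem:b} by restricting $u$ to the tangent plane $M$. Choose holomorphic affine coordinates $(z',z'')=(z_1,\dots,z_k,z_{k+1},\dots,z_n)$ so that $M=\{z''=0\}$, and identify $M$ with $\mathbb{C}^k$ via $z'$. Let $D'$ be the connected component of $M\cap D$ containing $x$ and set $v=(u-r)|_{D'}$, so that $v\in\mathrm{Psh}(D')\cap C^0(D')$. Since $M$ is tangent to $\overline{B}_u(r)$, the plane $M$ avoids the open set $\{u<r\}\subset\mathrm{int}\,\overline{B}_u(r)$, hence $u\ge r$ on $M$; thus $v\ge 0$ and $v(x)=0$. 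Moreover $v^{-1}(0)=S_u(r)\cap M=\overline{B}_u(r)\cap M$, because $u\ge r$ on $M$ forces $\{u\le r\}\cap M=\{u=r\}\cap M$. As the intersection of the convex set $\overline{B}_u(r)$ with the affine subspace $M$, this set is convex; it is nonempty since $x$ lies in it, and being convex hence connected it is contained in the single component $D'$.

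The key step is to show that $v$ is a maximal plurisubharmonic function on $D'$, that is, $(dd^cv)^k=0$ on $D'\subset\mathbb{C}^k$. Granting this, $v$ satisfies all the hypotheses of Theorem~\ref{theorem:b} with $n$ replaced by $k$: it is a non-negative continuous maximal plurisubharmonic function whose zero set $v^{-1}(0)$ is non-empty and convex. Theorem~\ref{theorem:b} then produces a complex one-dimensional convex set contained in $v^{-1}(0)=S_u(r)\cap M$ in whose relative interior the point $x$ lies, which is exactly the conclusion of the lemma.

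It remains to justify the key step, which I expect to be the main obstacle. When $u\in C^2$ it is elementary: $(dd^cu)^k=0$ holds precisely when the positive semidefinite complex Hessian of $u$ has rank at most $k-1$ at every point, and the complex Hessian of $u|_M$ is the $k\times k$ principal submatrix of that of $u$ in the chosen coordinates; a principal submatrix of a positive semidefinite matrix of rank at most $k-1$ again has rank at most $k-1<k$, so its determinant vanishes and $(dd^c(u|_M))^k=0$. For merely continuous $u$ this pointwise rank argument is unavailable, and a naive regularization does not help, because convolution does not preserve the equation $(dd^cu)^k=0$ when $k<n$. One must instead establish the statement as a genuine restriction property of the complex Monge-Amp\`ere operator to complex $k$-planes, presumably by a local comparison or slicing argument controlled by the ambient equation. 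This restriction property is the technical heart of the proof; it uses only the hypothesis $(dd^cu)^k=0$, whereas the tangency of $M$ enters solely to guarantee that $v\ge 0$ and that $v^{-1}(0)$ is convex.
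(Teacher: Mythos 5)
Your reduction is exactly the one the paper uses: restrict to the tangent plane $M$, note that tangency forces $u\geq r$ on $M\cap D$ (since $\{u<r\}$ lies in the interior of $\overline{B}_{u}(r)$), observe that $v^{-1}(0)=\overline{B}_{u}(r)\cap M$ is convex as the intersection of a convex set with an affine subspace, and invoke Theorem~\ref{theorem:b} in dimension $k$. All of that bookkeeping is correct. But the proposal stops short of a proof at precisely the step you yourself call the technical heart: for merely continuous $u$ you never establish the restriction property $(dd^{c}(u|_{M\cap D}))^{k}=0$. The $C^{2}$ Hessian-rank argument you give is fine but does not cover the hypotheses of the lemma, and ``a local comparison or slicing argument, presumably'' is a placeholder, not an argument. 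Since everything else in the lemma is elementary convexity, this missing step is the entire content, so what you have is a correct reduction with an acknowledged hole rather than a complete proof.

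It is worth recording how the paper fills the hole, because it shows your dismissal of regularization is too quick. Work in a neighborhood $U\times V\subset D$ with coordinates $(z',z'')$, $M=\{z''=0\}$, and let $\{u_{i}\}$ be smooth plurisubharmonic functions decreasing to $u$ near the support of a product test function $\phi(z')\psi(z'')\geq 0$ with $\psi(0)>0$. By Bedford--Taylor, $(dd^{c}u_{i})^{k}\to(dd^{c}u)^{k}$ weakly, so testing the vanishing current $(dd^{c}u)^{k}$ against $\phi\psi\bigwedge_{j=k+1}^{n}\tfrac{\sqrt{-1}}{2}dz_{j}\wedge d\bar{z}_{j}$ and applying Fubini to each smooth $u_{i}$ gives that $\lim_{i}\int_{V}\psi\bigl(\int_{M(z'')}\phi\,(dd^{c}u_{i}|_{M(z'')})^{k}\bigr)\bigwedge_{j=k+1}^{n}\tfrac{\sqrt{-1}}{2}dz_{j}\wedge d\bar{z}_{j}=0$, where $M(z'')$ is the slice $\{z''=\mathrm{const}\}$. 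On each slice the restrictions $u_{i}|_{M(z'')}$ still decrease to $u|_{M(z'')}$, so Bedford--Taylor convergence holds slicewise as well; uniform continuity of $u$ makes $z''\mapsto\int_{M(z'')}\phi\,(dd^{c}u|_{M(z'')})^{k}$ continuous, and dominated convergence identifies the limit as $\int_{V}\psi\bigl(\int_{M(z'')}\phi\,(dd^{c}u|_{M(z'')})^{k}\bigr)\bigwedge_{j=k+1}^{n}\tfrac{\sqrt{-1}}{2}dz_{j}\wedge d\bar{z}_{j}=0$. Positivity of the slice measures together with $\psi(0)>0$ then forces $\int_{M\cap D}\phi\,(dd^{c}u|_{M\cap D})^{k}=0$ for every $\phi$, which is the restriction property you needed; your reduction to Theorem~\ref{theorem:b} then goes through verbatim. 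So regularization does help here --- not because convolution preserves the equation on a fixed plane, but because the ambient equation $(dd^{c}u)^{k}=0$ controls the integrated slice masses over a family of parallel planes, and weak convergence holds both ambiently and on each slice.
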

Note that the above $M$ always exists since $\overline{B}_{u}(r)$ is convex.  
\begin{proof}
By taking a suitable holomorphic affine coordinate $(z_{1}, \ldots, z_{n})$, 
we may assume that $x = (0, \ldots, 0)$ and 
$M$ is defined by $z_{k+1} = \cdots = z_{n} =0$.  
Let $z' = (z_{1}, \ldots, z_{k})$ and $z'' = (z_{k+1}, \ldots, z_{n})$.  
Take open balls $0 \in U \subset \mathbb{C}^{k}$ and 
$0 \in V \subset \mathbb{C}^{n-k}$ of small radii such that $U \times V$ is contained in $D$.  
We show that $(dd^{c}u|_{M \cap D})^{k} = 0$ in $U \times \{0\}$.  
Let $a \in V$ and let $M(a)$ be the intersection of $D$ and the complex $k$-dimensional affine subspace defined by $z'' = a$.  
Let $\phi$ be a non-negative test function in $U$ and $\psi$ be a non-negative test function 
in $V$ such that $\psi(0) > 0$.  
We take a decreasing sequence of smooth plurisubharmonic functions $\{u_{i}\}$ on a 
neighborhood of the support of $\phi(z')\psi(z'')$ in $U \times V$ such that $u_{i}$ converges to $u$.  
Then 
$(dd^{c}u_{i})^{k} \to (dd^{c}u)^{k}$ $(i \to \infty)$ in the weak topology on the space of 
currents (see \cite{BT82}).  
By Fubini's theorem, we have that 
\begin{align*}
0  =  & \int_{U \times V} \phi(z')\psi(z'') (dd^{c}u)^{k} \bigwedge_{j = k+1}^{n}
\frac{\sqrt{-1}}{2}dz_{j}\wedge d\bar{z}_{j} \\
= & \lim_{i \to \infty} \int_{U \times V} 
\phi(z')\psi(z'') (dd^{c}u_{i})^{k} \bigwedge_{j = k+1}^{n}
\frac{\sqrt{-1}}{2}dz_{j}\wedge d\bar{z}_{j} \\
= & \lim_{i \to \infty} \int_{V} \psi(z'') 
\bigwedge_{j = k+1}^{n} \frac{\sqrt{-1}}{2}dz_{j}\wedge d\bar{z}_{j}
\int_{M(z'')} \phi(z') (dd^{c}u_{i}|_{M(z'')})^{k}.  
\end{align*}
Since $u$ is continuous in $D$, 
$u$ is uniformly continuous in a neighborhood of the support of $\phi(z')\psi(z'')$.  
If we consider $(dd^{c}u|_{M(z'')})^{k}$ as a current in $U$, 
$(dd^{c}u|_{M(z'')})^{k} \to (dd^{c}u|_{M(z''_{0})})^{k}$ when $z'' \to z''_{0} \in V$ 
in the weak topology (see Chapter~III of \cite{Dem}).  
Therefore $\int_{M(z'')} \phi(z') (dd^{c}u|_{M(z'')})^{k}$ is a non-negative continuous function of $z''$ and, 
by Lebesgue's dominated convergence theorem, we have that 
\[
\int_{V} \psi(z'') 
\bigwedge_{j = k+1}^{n} \frac{\sqrt{-1}}{2}dz_{j}\wedge d\bar{z}_{j}
\int_{M(z'')} \phi(z') (dd^{c}u|_{M(z'')})^{k} = 0.  
\]
Since $\psi(0) > 0$, $\int_{M \cap D} \phi(z') (dd^{c}u|_{M \cap D})^{k} = 0$ for any non-negative test function $\phi$.  
Hence $(dd^{c}u|_{M \cap D})^{k} = 0$ in $U \times \{0\}$.  

Since $\overline{B}_{u}(r)$ is convex and $M$ is tangent to $\overline{B}_{u}(r)$, $u|_{M \cap D}$ attains its minimum $r$ at $x$ and $u|_{M \cap D}^{-1}(r)$ is convex.  
By Theorem~\ref{theorem:b}, there exists a complex one-dimensional convex set in $S_{u}(r) \cap M$ which contains $x$ in its relative interior.  
This completes the proof.  
\end{proof}

\begin{proof}[Proof of Theorem~\ref{theorem:a}]
We prove Theorem~\ref{theorem:a} 
by induction on $l = n-k$ ($l = 1, \ldots, n-1$).  
Let $x \in S_{u}(r)$.  
If $x$ lies in the interior of $\overline{B}_{u}(r)$, the theorem holds.  
Hence we may assume that $x$ is not contained in the interior of $\overline{B}_{u}(r)$.  
If $l = 1$, then $k = n-1$ and the statement follows by Lemma~\ref{lemma:1}.  
For $l > 1$, the inductive hypothesis implies that there exists 
a complex $(n-k-1)$-dimensional convex set $C_{1}$ in  $S_{u}(r)$ 
which contains $x$ in its relative interior.  
Let $H$ be a complex affine hyperplane through $x$ 
and which is tangent to $\overline{B}_{u}(r)$.  
Take a complex $k$-dimensional affine subspace $M$ of $H$ such that $x \in M$ and 
$M \cap C_{1} = \{x\}$.  
By Lemma~\ref{lemma:1}, there exists a complex one-dimensional convex set 
$C_{2} \subset S_{u}(r)$ which contains 
$x$ in its relative interior.  
Then the convex hull $\mathrm{ch}(C_{1} \cup C_{2})$ is contained in $\overline{B}_{u}(r)$ since $\overline{B}_{u}(r)$ is convex and $\mathrm{ch}(C_{1} \cup C_{2})$ is contained in $S_{u}(r)$ by the maximum principle.  
\end{proof}

We now show the following proposition announced in Section~1.  
\begin{proposition}\label{proposition:b}
Let $D$ be a domain in $\mathbb{C}^{n}$ and $u \in \mathrm{Psh}(D) \cap C^{0}(D)$.  
Let $k \in \{1, \ldots, n\}$.  
Assume that any point of $D$ lies in a complex $(n-k+1)$-dimensional locally closed submanifold of $D$ 
without boundary on which $u$ is pluriharmonic.  
Then $(dd^{c}u)^{k} = 0$ in $D$.  
\end{proposition}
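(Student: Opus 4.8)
The plan is to reduce the vanishing of $(dd^{c}u)^{k}$ to a slice-wise maximality statement and then to produce, on each slice, the Jensen measures required by Lemma~\ref{lemma:2}. Since $(dd^{c}u)^{k}$ is a positive $(k,k)$-current, it vanishes if and only if its trace measure $(dd^{c}u)^{k}\wedge \beta^{n-k}$ vanishes, where $\beta = \frac{\sqrt{-1}}{2}\sum_{j} dz_{j}\wedge d\bar{z}_{j}$. As $\beta^{n-k}$ is, up to a positive constant, the sum over increasing multi-indices $I$ with $|I| = n-k$ of the elementary positive forms $\beta_{I} = \bigwedge_{j \in I}\frac{\sqrt{-1}}{2}dz_{j}\wedge d\bar{z}_{j}$, it suffices to show $(dd^{c}u)^{k}\wedge \beta_{I} = 0$ for each such $I$. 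Fixing $I$ and relabeling coordinates as in Lemma~\ref{lemma:1}, the slicing formula proved there identifies this measure, tested against $\phi(z')\psi(z'')$, with $\int_{V}\psi\,\beta_{I}\int_{M(z'')}\phi\,(dd^{c}u|_{M(z'')})^{k}$, where $M(z'')$ runs over the axis-parallel complex $k$-dimensional affine slices $\{z'' = \mathrm{const}\}$. Hence it is enough to prove that $(dd^{c}(u|_{M}))^{k} = 0$ on $M \cap D$ for every such slice $M$.

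Fix an axis-parallel $k$-dimensional affine slice $M$, which I identify with an open subset of $\mathbb{C}^{k}$, and regard $u|_{M}$ as an element of $\mathrm{Psh}(M \cap D)\cap C^{0}(M \cap D)$. By Lemma~\ref{lemma:2} applied in ambient dimension $k$, the equality $(dd^{c}(u|_{M}))^{k} = 0$ will follow once I exhibit, for each $q \in M \cap D$ and each connected neighborhood $U \subset\subset M \cap D$ of $q$, a Jensen measure $\mu \in \mathcal{J}_{q}^{c}$ supported in $\overline{U}$ with $u(q) = \int u\,d\mu$ and $\mu \neq \delta_{q}$. I construct $\mu$ from the hypothesis: let $V_{q}\subset D$ be the complex $(n-k+1)$-dimensional submanifold through $q$ on which $u$ is pluriharmonic. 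The intersection $V_{q}\cap M$ is cut out in $V_{q}$ by the $n-k$ affine equations defining $M$, so every irreducible component of $V_{q}\cap M$ through $q$ has dimension at least $(n-k+1)-(n-k) = 1$; choose a nonconstant holomorphic disc $f\colon \Delta \to V_{q}\cap M$ with $f(0) = q$ and, after shrinking the radius, with $f(\overline{\Delta})\subset U$. Since $u|_{V_{q}}$ is pluriharmonic it is locally the real part of a holomorphic function, so $u \circ f = \mathrm{Re}(g\circ f)$ is harmonic on $\Delta$.

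Let $\sigma$ be the uniform probability measure on $\partial\Delta$ and set $\mu = f_{*}\sigma$, a probability measure supported on $f(\partial\Delta)\subset U \subset M$. For any $\phi \in \mathrm{Psh}(M \cap D)\cap C^{0}$ the pullback $\phi\circ f$ is subharmonic on $\Delta$, whence $\phi(q) = (\phi\circ f)(0) \leq \int_{\partial\Delta}\phi\circ f\,d\sigma = \int \phi\,d\mu$, so $\mu \in \mathcal{J}_{q}^{c}$; applying this to $u$, which is harmonic along $f$, gives the equality $u(q) = \int u\,d\mu$, and $\mu \neq \delta_{q}$ because $f$ is nonconstant. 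This is exactly the data Lemma~\ref{lemma:2} requires, so $(dd^{c}(u|_{M}))^{k} = 0$, and combining over all slices and all $I$ yields $(dd^{c}u)^{k} = 0$. I expect the main point needing care to be the dimension count for $V_{q}\cap M$ together with the extraction of a genuine nonconstant holomorphic disc through the possibly singular point $q$ on which $u$ is harmonic; the remainder is the bookkeeping of the slicing formula of Lemma~\ref{lemma:1} and the positivity argument reducing the current to its trace measure.
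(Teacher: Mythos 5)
Your proposal is correct and follows essentially the same route as the paper: the same reduction of $(dd^{c}u)^{k}$ to trace measures against the elementary forms $\beta_{I}$, the same Fubini slicing as in Lemma~\ref{lemma:1}, and the same construction of a non-trivial Jensen measure from an analytic disc inside the curve $V_{q}\cap M$ (which the dimension count guarantees), fed into Lemma~\ref{lemma:2}. The one point you flag as needing care --- producing a nonconstant disc through the possibly singular point $q$ --- is handled in the paper exactly as one would expect, by passing to a resolution $\pi\colon\widetilde{R}\to R$ of an irreducible curve $R\subset N\cap U(z'')$ and pushing forward the rotation-invariant measure on $\partial\Delta$ via $\pi\circ\varphi$.
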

\begin{proof} 
We show 
\[
\int_{D} \phi(z) (dd^{c}u)^{k} \bigwedge_{i = 1}^{n-k} \frac{\sqrt{-1}}{2}dz_{j_{i}} \wedge d\bar{z}_{j_{i}}= 0 
\]
for any non-negative test function $\phi$ in $D$ and any $1 \leq j_{1} < \cdots < j_{n-k} \leq n$.  
Without loss of generality, we may assume that $j_{1} = k+1, \ldots, j_{n-k}=n$.  
Let $U \subset \mathbb{C}^{k}, V \subset \mathbb{C}^{n-k}$ be open subsets such that 
$U \times V \subset D$.  
We may assume that $D = U \times V$.  
Let $z' = (z_{1}, \ldots, z_{k})$ and $z'' = (z_{k+1}, \ldots, z_{n})$ and let 
$p: \mathbb{C}^{n} \to \mathbb{C}^{n-k}$ be the projection map such that $p(z) = z''$.  
Put $U(z'') = (U \times V) \cap p^{-1}(z'')$ for $z'' \in V$.  
Then, by the same argument as in the proof of Lemma~\ref{lemma:1}, 
it follows that 
\begin{align*}
& \int_{U \times V} \phi(z) (dd^{c}u)^{k} \bigwedge_{i = k+1}^{n} \frac{\sqrt{-1}}{2}dz_{i} \wedge d\bar{z}_{i} \\
= & \int_{V} \bigwedge_{i = k+1}^{n}\frac{\sqrt{-1}}{2}dz_{i} \wedge d\bar{z}_{i} 
\int_{U(z'')} \phi(z', z'')|_{U(z'')} (dd^{c}u|_{U(z'')})^{k}.  
\end{align*}
Hence it is enough to show that $(dd^{c}u|_{U(z'')})^{k} = 0$ in $U(z'')$ for any 
$z'' \in V$.  
Let $x \in U(z'')$.  
Then $x$ lies in a complex ($n-k+1$)-dimensional locally closed submanifold $N \subset D$ without boundary on which 
$u$ is pluriharmonic.  
It holds that 
$N \cap U(z'')$ contains an irreducible complex curve $R$ through $x$.  
Let $\pi:\widetilde{R} \to R$ be a resolution of singularities of $R$.   
Then $\pi^{*}u$ is harmonic in $\widetilde{R}$.  
Let $\tilde{x} \in \widetilde{R}$ such that $\pi(\tilde{x}) = x$.  
Let $\varphi$ be a continuous function from the closed unit disk $\overline{\Delta}$ in $\mathbb{C}$ to a small neighborhood of $\tilde{x}$ in $\widetilde{R}$ such that 
$\varphi$ is holomorphic in the unit disk $\Delta$ and $\varphi(0) = \tilde{x}$.  
Let $\lambda$ be the probability measure invariant under rotations on $\partial \Delta$.  
Then $\mu = \pi_{*} \varphi_{*} \lambda$ is a Jensen measure with barycenter $x$ on $U(z'')$ such that 
$\int u d\mu = u(x)$.  
By Lemma~\ref{lemma:2}, 
we have $(dd^{c}u|_{U(z'')})^{k} = 0$ in $U(z'')$. 
\end{proof}

\begin{corollary}\label{corollary:1}
Let $\Omega \subset \mathbb{C}^{n}$ be a tube domain and let $\omega \subset \mathbb{R}^{n}$ be a domain 
such that $\Omega = \{z \in \mathbb{C}^{n}; \mathrm{Re}\, z \in \omega\}$.  
Let $u \in \mathrm{Psh}(\Omega) \cap C^{0}(\Omega)$ such that $u(z)$ is independent of $\mathrm{Im}\, z$ and let $\tilde{u} \in C^{0}(\omega)$ such that $\tilde{u}(\mathrm{Re}\, z) = u(z)$.  
Let $k \in \{1, \ldots, n\}$.  
\begin{itemize}
\item[(1)]
Assume that $(dd^{c}u)^{k} = 0$ in $\Omega$.  
Then, for any point of $x \in \omega$, there exists a real $(n-k)$-dimensional convex set $C$ in $\omega$ such that 
the relative interior of $C$ contains $x$ and $\tilde{u}$ is constant on $C$.  
\item[(2)]
Assume that any point of $\omega$ lies in the relative interior of a real $(n-k+1)$-dimensional convex set on which $\tilde{u}$ is affine.  
Then $(dd^{c}u)^{k} = 0$ in $\Omega$.  
\end{itemize}
\end{corollary}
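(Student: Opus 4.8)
The plan is to transfer both statements to the complex setting already handled, using the standard correspondence between plurisubharmonic functions on a tube and convex functions on its base. First I would record that, since $u(z)$ depends only on $\mathrm{Re}\,z$, plurisubharmonicity of $u$ on $\Omega$ is equivalent to convexity of $\tilde u$ on $\omega$ (heuristically the complex Hessian of $u$ is one quarter of the real Hessian of $\tilde u$, so one is positive exactly when the other is). Consequently each sublevel set $\{\tilde u \le t\}$ is convex, and since $\overline B_u(t) = \{\tilde u \le t\} + i\mathbb R^n$ and $S_u(r) = \{\tilde u = r\} + i\mathbb R^n$, the level set $\overline B_u(t)$ is convex in $\mathbb C^n$ and $S_u(r)$ is invariant under all imaginary translations.

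For part (1), the case $k = n$ is trivial (take $C = \{x\}$), so assume $k \le n-1$. Fix $x \in \omega$, put $r = \tilde u(x)$, and regard $x \in \mathbb R^n \subset \mathbb C^n$, so $x \in S_u(r)$. Since $(dd^c u)^k = 0$ and $\overline B_u(r)$ is convex, Theorem~\ref{theorem:a} yields a complex $(n-k)$-dimensional convex set $K \subset S_u(r)$ whose relative interior contains $x$. Let $\pi = \mathrm{Re}: \mathbb C^n \to \mathbb R^n$; as $\pi$ is $\mathbb R$-linear and surjective, $C_0 := \pi(K)$ is convex with $x = \pi(x)$ in its relative interior, and $\tilde u \equiv r$ on $C_0$ because $\tilde u(\pi(w)) = u(w) = r$ for $w \in K$. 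Writing $W$ for the complex $(n-k)$-dimensional direction space of $\mathrm{aff}(K)$, the kernel of $\pi|_W$ is $W \cap i\mathbb R^n$, which has the same real dimension as $W \cap \mathbb R^n$; since $W \cap \mathbb R^n$ is a totally real subspace of $W$ we get $\dim_{\mathbb R}(W \cap \mathbb R^n) \le \dim_{\mathbb C} W = n-k$, whence $\dim_{\mathbb R} C_0 = 2(n-k) - \dim_{\mathbb R}(W\cap \mathbb R^n) \ge n-k$. Intersecting $C_0$ with a real $(n-k)$-dimensional affine subspace through $x$ inside $\mathrm{aff}(C_0)$ then produces the required real $(n-k)$-dimensional convex set $C$, with $x$ in its relative interior and $\tilde u$ constant on it.

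For part (2) I would apply Proposition~\ref{proposition:b} with $D = \Omega$. Fix $z \in \Omega$ and set $x = \mathrm{Re}\,z \in \omega$; by hypothesis $x$ lies in the relative interior of a real $(n-k+1)$-dimensional convex set $C$ on which $\tilde u$ is affine, say $\tilde u(x + v) = c + \ell(v)$ for $v$ near $0$ in the direction space $V_{\mathbb R}$ of $\mathrm{aff}(C)$, with $\ell$ real-linear. Let $V_{\mathbb C} = V_{\mathbb R} \oplus iV_{\mathbb R}$ be its complexification and let $L$ be the $\mathbb C$-linear extension of $\ell$. On the complex $(n-k+1)$-dimensional submanifold $N = (z + V_{\mathbb C}) \cap \{\,\|w - z\| < \varepsilon\,\}$, with $\varepsilon$ small enough that $\mathrm{Re}\,w$ remains in the affine region, one has $u(w) = \tilde u(x + \mathrm{Re}(w-z)) = c + \ell(\mathrm{Re}(w-z)) = c + \mathrm{Re}\,L(w-z)$, so $u|_N$ is the real part of a holomorphic function and hence pluriharmonic. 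Since $N$ is a complex $(n-k+1)$-dimensional locally closed submanifold without boundary through an arbitrary $z \in \Omega$, Proposition~\ref{proposition:b} gives $(dd^c u)^k = 0$ in $\Omega$.

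The only delicate point is the dimension count in part (1): Theorem~\ref{theorem:a} only delivers a complex set, whose real projection could a priori have real dimension up to $2(n-k)$. The totally-real bound $\dim_{\mathbb R}(W \cap \mathbb R^n) \le n-k$ is exactly what forces the projection to have dimension at least $n-k$, after which a routine slicing cuts it down to the exact dimension required; the rest is a direct translation through the tube correspondence.
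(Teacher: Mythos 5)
Your route is the same as the paper's: transfer everything through the tube correspondence (psh $u$ independent of $\mathrm{Im}\,z$ corresponds to convex $\tilde u$), then invoke Theorem~\ref{theorem:a} for part (1) and Proposition~\ref{proposition:b} for part (2). The paper compresses all of this into one sentence (``$(1)$ and $(2)$ follow immediately''), so the details you supply are exactly the ones it leaves implicit, and they check out: in (2), the complexification $V_{\mathbb{C}} = V_{\mathbb{R}} \oplus iV_{\mathbb{R}}$ is indeed complex $(n-k+1)$-dimensional because $V_{\mathbb{R}} \subset \mathbb{R}^{n}$ is totally real, and $u|_{N} = c + \mathrm{Re}\,L(w-z)$ is pluriharmonic as claimed; in (1), your dimension count is the genuinely nontrivial point — Theorem~\ref{theorem:a} produces a complex $(n-k)$-dimensional set $K$, and it is the totally-real bound $\dim_{\mathbb{R}}(W \cap \mathbb{R}^{n}) \leq \dim_{\mathbb{C}} W = n-k$ (using that $W$ is $i$-invariant, so $\ker(\mathrm{Re}|_{W}) = W \cap i\mathbb{R}^{n} = i(W \cap \mathbb{R}^{n})$) that forces $\dim_{\mathbb{R}} \mathrm{Re}(K) \geq n-k$. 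Your handling of the excluded case $k=n$ (where Theorem~\ref{theorem:a} does not apply but the conclusion is trivial) is also correct and worth making explicit.

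There is one step that would fail as written: you assert that ``each sublevel set $\{\tilde u \leq t\}$ is convex,'' but on a non-convex base $\omega$ local convexity of $\tilde u$ does not give this. Take $\tilde u$ constant on a horseshoe-shaped $\omega$: then $\{\tilde u \leq t\} = \omega$ is not convex, $\overline{B}_{u}(r)$ is not convex, and the hypothesis of Theorem~\ref{theorem:a} is violated, so your invocation of it in part (1) is unjustified. The repair is precisely the paper's opening sentence, ``Since the statements are local, we may assume that $\Omega$ and $\omega$ are convex'': both conclusions are local — in (1) the set $C$ may be sought inside any neighborhood of $x$ (intersecting a convex set with a small ball centered at a relative-interior point keeps $x$ in the relative interior), and in (2) the vanishing of a current is local — so one replaces $\omega$ by a ball around $x$ and $\Omega$ by the corresponding convex tube, on which $\tilde u$ is genuinely convex and all sublevel sets are convex. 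With that one-line reduction inserted at the start, the rest of your argument goes through verbatim.
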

\begin{proof}
Since the statements are local, we may assume that $\Omega$ and $\omega$ are convex.  
By the hypothesis, $u$ and $\tilde{u}$ are convex functions.  
Hence $(1)$ and $(2)$ follow immediately from Theorem~\ref{theorem:a} and Proposition~\ref{proposition:b}.  
\end{proof}

Let $D$ be a Reinhardt domain in $(\mathbb{C}^{*})^{n}$.  
Then we have statements analogous to Corollary~\ref{corollary:1} for $u \in \mathrm{Psh}(D) \cap C^{0}(D)$
such that $u(z)$ is independent of $\mathrm{arg}\, (z)$ for $z \in D$.  

\begin{example}
Let $D \subset \mathbb{C}^{n}$ be a Reinhardt domain and $u \in \mathrm{Psh}(D) \cap C^{0}(D)$.  
Define 
\[
U(z_{1}, \ldots, z_{n}) = \sup_{0 \leq \theta_{1}, \ldots, \theta_{n} \leq 2\pi} u(e^{\sqrt{-1}\theta_{1}}z_{1}, \ldots, e^{\sqrt{-1}\theta_{n}}z_{n})
\]
for $z \in D$.   
Then $U \in \mathrm{Psh}(D) \cap C^{0}(D)$ and $U(z)$ is independent of $\mathrm{arg}\, (z)$.  
Assume that $(dd^{c}U)^{k} = 0$ in $D$.  
Then, for any $x \in D$ such that $x_{i} \neq 0$ for $1 \leq i \leq n$, there exists a real $(n-k+1)$-dimensional logarithmically convex set $C  \subset \{(|z_{1}|, \ldots, |z_{n}|) \in \mathbb{R}^{n}; z \in D \}$ such that the relative interior 
of $C$ contains $(|x_{1}|, \ldots, |x_{n}|)$ and $U$ is constant on $\{(z_{1}, \ldots, z_{n}) \in D; (|z_{1}|, \ldots, |z_{n}|) \in C\}$.   
\end{example}

\section{Balls for the Kobayashi distance in a bounded convex domain}\label{section:3}

Throughout this section $D$ will denote a bounded convex domain in $\mathbb{C}^{n}$.  
We define the complex $k$-extreme point of a convex set.  
\begin{definition}
Let $C \subset \mathbb{C}^{n}$ be a convex set.  
A point $z \in C$ is complex $k$-extreme point ($0 \leq k \leq n$) if $z$ lies in the relative interior of a 
complex $k$-dimensional convex set within $C$, but not a complex $(k+1)$-dimensional convex set 
within $C$.  
We denote by $E^{k}(C)$ the set of complex $k$-extreme points of $C$.  
\end{definition}

Let $\overline{B}_{D}(x, r) \subset D$ denote the closed ball of radius $r \geq 0$ centered at $x$ for 
the Kobayashi distance of $D$.  
Recall that $\overline{B}_{D}(x, r)$ is convex.  
In this section, we show a relation between the supports of Monge-Amp\`ere currents 
$(dd^{c}K_{D, x})^{k}$, $(dd^{c}e^{K_{D, x}})^{k}$ and $E^{k}(\overline{B}_{D}(x, r))$.  
We define 
\[
E_{x}^{<k} = \bigcup_{i = 0}^{k-1}\bigcup_{r \geq 0} E^{i}(\overline{B}_{D}(x, r)) \quad \text{for} 
\quad k = 1, \ldots, n.  
\]
For example, 
if $D = \{z \in \mathbb{C}^{n}; |z_{1}|<1, \ldots, |z_{n}|<1\}$ is a polydisk, 
$E_{0}^{< k}$ is the set of points $z \in D$ 
where there exists positive integers $1 \leq i(1) < \cdots < i(n-k+1) \leq n$ 
such that $|z_{i(1)}| = \cdots = |z_{i(n-k+1)}| = \max\{|z_{1}|, \ldots, |z_{n}|\}$.  
Let $k_{D}$ be the Kobayashi distance on $D$.  
We note that 
\begin{align*}
D \setminus E_{x}^{< k} & = \{ z \in D ; \text{$z$ lies in the relative interior of a 
complex $k$-dimensional}  \\
& \qquad 
\text{convex set contained in $\overline{B}_{D}(x, k_{D}(x, z))$}\}.  
\end{align*}  

\begin{remark}\label{remark:1}
The complex $k$-dimensional convex set appearing in the right hand side of the above equation is contained in the Kobayashi sphere of 
radius $k_{D}(x, z)$ centered at $x$ 
by the maximum principle.  
\end{remark}

Let $T$ be a current in $D$.  
We denote by $|T|$ the support of $T$.  
By results of \cite{BT82} and \cite{Dem87}, 
currents $(dd^{c}K_{D, x})^{k}, (dd^{c}e^{K_{D, x}})^{k}$ ($1 \leq k \leq n$) are well-defined in $D$.  

\begin{theorem}\label{theorem:1}
Let $D \subset \mathbb{C}^{n}$ be a bounded convex domain.  
Then: 
\begin{itemize}
\item[(i)]
$|(dd^{c}e^{K_{D, x}})^{n-k+1}| \subset \overline{E_{x}^{< k}} \cap D \subset 
|(dd^{c}K_{D, x})^{n-k}|$ 
for $1 \leq k \leq n-2$.  
\item[(ii)]
$|dd^{c}K_{D, x}| = \overline{E_{x}^{<(n-1)}} \cap D$.  
\end{itemize}
\end{theorem}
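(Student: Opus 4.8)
The plan is to prove each of the four inclusions in (i)--(ii) by passing to complements and invoking, according to the direction, either Proposition~\ref{proposition:b} or a localized form of Theorem~\ref{theorem:a}. Throughout I write $K=K_{D,x}$, and I recall from Lempert~\cite{Lem81} that the sublevel sets $\overline{B}_K(t)=\overline{B}_D(x,r)$ (with $t=\log\tanh r$) are convex, together with the description
\[
D \setminus E_x^{<k} = \{z \in D : z \text{ lies in the relative interior of a complex } k\text{-dimensional convex set contained in } \overline{B}_D(x,k_D(x,z))\},
\]
so that, by Remark~\ref{remark:1}, each such convex set lies in the Kobayashi sphere $S_K(K(z))$, on which $K$, and hence $e^{K}$, is constant.

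For the inner inclusion $|(dd^{c}e^{K})^{n-k+1}|\subset\overline{E_x^{<k}}\cap D$ I would work on the open set $D\setminus\overline{E_x^{<k}}$, which is contained in $D\setminus E_x^{<k}$: every point $z$ there lies in the relative interior of a complex $k$-dimensional convex set on which $e^{K}$ is constant, hence pluriharmonic, and a small ball in its affine hull is a complex $k$-dimensional locally closed submanifold without boundary. Proposition~\ref{proposition:b}, applied with the integer $n-k+1$ (so that the required submanifold dimension is $k$), then gives $(dd^{c}e^{K})^{n-k+1}=0$ on $D\setminus\overline{E_x^{<k}}$, which is the stated inclusion. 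The identical scheme, now with $K$ (resp. $e^{K}$) and the integer $2$, uses the complex $(n-1)$-dimensional flats to yield $(dd^{c}K)^{2}=0$ and $(dd^{c}e^{K})^{2}=0$ on $D\setminus\overline{E_x^{<(n-1)}}$, a fact I will need for (ii).

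For the outer inclusion $\overline{E_x^{<k}}\cap D\subset|(dd^{c}K)^{n-k}|$ (and likewise the inclusion $\supset$ of (ii), the case $k=n-1$) I pass to the open set $W=D\setminus|(dd^{c}K)^{n-k}|$, where $(dd^{c}K)^{n-k}=0$. The point is that the proof of Theorem~\ref{theorem:a} is local: Lemma~\ref{lemma:1} uses only the vanishing of the relevant Monge--Amp\`ere power in a neighborhood of the point, together with the convexity of the ambient sublevel set, and the induction constructs the required flat near a single point. Since the balls $\overline{B}_D(x,r)$ are convex in $D$, this local version applies at each $z\in W$ and produces a complex $k$-dimensional convex set in $S_K(K(z))$ through $z$; hence $W\subset D\setminus E_x^{<k}$, and as $W$ is open, $W\subset\mathrm{int}(D\setminus E_x^{<k})=D\setminus\overline{E_x^{<k}}$, which is the claim.

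The delicate point is the reverse inclusion of (ii), $|dd^{c}K|\subset\overline{E_x^{<(n-1)}}\cap D$, i.e. the pluriharmonicity of $K$ on $W=D\setminus\overline{E_x^{<(n-1)}}$. On $W$ each $z$ lies in the relative interior of a complex $(n-1)$-dimensional flat $C_z\subset S_K(K(z))$ on which $K$ is constant; by the previous paragraphs $(dd^{c}K)^{2}=0$ and $(dd^{c}e^{K})^{2}=0$ on $W$, and since by the chain rule $dd^{c}e^{K}=e^{K}(dd^{c}K+dK\wedge d^{c}K)$ while $(dK\wedge d^{c}K)^{2}=0$, these combine to $dd^{c}K\wedge dK\wedge d^{c}K=0$ on $W$. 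Now I bring in Lempert's complex geodesic $\phi_z\colon\Delta\to D$ through $x$ and $z$, for which $K\circ\phi_z(\zeta)=\log|\zeta|$: thus $K$ is harmonic along $\phi_z$, while $\partial K$ annihilates the tangent space to the $(n-1)$-dimensional flat $C_z$ but satisfies $\partial K(\phi_z'(\zeta_0))\neq0$, so the flat direction and the geodesic direction span $\mathbb{C}^{n}$. As $dd^{c}K\ge0$ vanishes on the flat and along the geodesic, positivity forces the complex Hessian to vanish at $z$, i.e. $dd^{c}K=0$. I expect the main obstacle to be making this kernel computation rigorous for a merely continuous $K$, where no pointwise Hessian is available: the plan is to circumvent it via Lempert's holomorphic left inverse $\tilde{\phi}_z\colon D\to\Delta$ of $\phi_z$, showing that near $z$ in $W$ the flat leaves are exactly the fibers of $\tilde{\phi}_z$ and that $K=\log|\tilde{\phi}_z|$ there, which exhibits $K$ as pluriharmonic directly, with the current identity $dd^{c}K\wedge dK\wedge d^{c}K=0$ controlling the single transverse direction.
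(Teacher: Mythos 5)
Your handling of three of the four inclusions is correct and coincides with the paper's argument: the inner inclusions follow from Proposition~\ref{proposition:b} applied to $e^{K_{D,x}}$ (resp.\ $K_{D,x}$) with exponent $n-k+1$, using that every point of $D\setminus\overline{E_{x}^{<k}}$ lies in the relative interior of a complex $k$-dimensional flat inside a Kobayashi sphere on which the function is constant; the outer inclusions follow from Theorem~\ref{theorem:a}. Your explicit observation that Theorem~\ref{theorem:a} localizes --- Lemma~\ref{lemma:1} and the induction need only the vanishing of $(dd^{c}K_{D,x})^{n-k}$ near the point together with the global convexity of the Kobayashi balls --- is a point the paper uses implicitly, and making it explicit is a genuine improvement in care.

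The gap is in the remaining inclusion $|dd^{c}K_{D,x}|\subset\overline{E_{x}^{<(n-1)}}\cap D$, where your argument stops short of a proof. The pointwise Hessian-kernel computation is, as you concede, unavailable for a merely continuous $K$; but the proposed fallback does not close the hole, because the assertion that near $z$ the $(n-1)$-flats are exactly the fibers of $\widetilde{\phi}_{z}$ with $K=\log|\widetilde{\phi}_{z}|$ is precisely the statement that has to be proved, and the current identities you derive cannot force it. Concretely, $u=\max(\log|z_{1}|,0)$ depends on $z_{1}$ alone, so $(dd^{c}u)^{2}=0$, $(dd^{c}e^{u})^{2}=0$ and $du\wedge d^{c}u\wedge dd^{c}u=0$ (every twofold wedge contains $dz_{1}\wedge d\bar{z}_{1}$ twice), and every point lies in the relative interior of an $(n-1)$-flat $\{z_{1}=c\}$ on which $u$ is constant --- yet $dd^{c}u\neq 0$. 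Hence everything you establish on $W$ is compatible with $dd^{c}K_{D,x}\neq 0$; the geodesic must enter through a geometric, not infinitesimal, mechanism. The paper supplies exactly this: with Lempert's retraction $\rho$ onto the geodesic disk, whose fibers are intersections of $D$ with complex hyperplanes tangent to the Kobayashi balls, one chooses $W$ so that $\rho^{-1}(w)\cap W$ is connected and proves $\rho^{-1}(w)\cap W\subset S_{D}(x,k_{D}(x,w))$ by an open--closed argument, the open part being the crux: an $(n-1)$-flat $C$ through a point $\gamma$ of $\rho^{-1}(w)\cap S_{D}(x,k_{D}(x,w))$ satisfies $\varphi^{-1}(\rho(C))\subset\{|\zeta|\leq\tanh k_{D}(x,w)\}$ by the distance-decreasing property, with the modulus attained at $\gamma$, so the maximum principle forces $\varphi^{-1}(\rho(C))$ to be a single point and $C\subset\rho^{-1}(w)$. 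This yields $K_{D,x}=K_{D,x}\circ\rho$ on $W$ and hence $dd^{c}K_{D,x}=0$ there. Without this step (or an equivalent substitute) your proof of (ii) is incomplete.
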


\begin{remark}\label{remark:2}
A current $dK_{D, x} \wedge d^{c}K_{D, x} \wedge (dd^{c}K_{D, x})^{k}$  ($1 \leq k \leq n-1$) is well-defined in $D \setminus\{x\}$.  
Since 
\[
(dd^{c}e^{K_{D, x}})^{k} = e^{k K_{D, x}}(dd^{c}K_{D, x})^{k} + k e^{k K_{D, x}}dK_{D, x} \wedge d^{c}K_{D, x} \wedge 
(dd^{c}K_{D, x})^{k-1},  
\] 
it follows $|(dd^{c}e^{K_{D, x}})^{k}| = |(dd^{c}K_{D, x})^{k}| \cup |dK_{D, x} \wedge d^{c}K_{D, x} \wedge 
(dd^{c}K_{D, x})^{k-1}|$ in $D \setminus \{x\}$.  
\end{remark}

\begin{proof}
By Theorem~\ref{theorem:a}, it follows that 
$D \setminus |(dd^{c}K_{D, x})^{n-k}| \subset D \setminus \overline{E_{x}^{<k}}$ for $1 \leq k \leq n-1$.  
For $z \in D \setminus \overline{E_{x}^{<k}}$, there exists a complex $k$-dimensional convex set which contains $z$ 
in its relative interior on which $e^{K_{D, x}}$ is constant.  
Then $D \setminus \overline{E_{x}^{< k}} \subset D \setminus |(dd^{c}e^{K_{D, x}})^{n-k+1}|$ for $1 \leq k \leq n-2$ by Proposition~\ref{proposition:b}.  
This completes the proof of (i).  

Next, we prove (ii).  
Since we have already proved that $\overline{E_{x}^{<(n-1)}} \cap D \subset |dd^{c}K_{D, x}|$, 
it is enough to show $D \setminus \overline{E_{x}^{<(n-1)}} \subset D \setminus |dd^{c}K_{D, x}|$.  

Before giving the proof, we recall that a complex geodesic (for the Kobayashi distance) of a convex 
domain $D$ 
is a holomorphic map $\varphi$ from the unit disk $\Delta \subset \mathbb{C}$ into $D$ such that 
$k_{\Delta}(\zeta_{1}, \zeta_{2}) = k_{D}(\varphi(\zeta_{1}), \varphi(\zeta_{2}))$ 
for every pair of points $\zeta_{1}, \zeta_{2} \in \Delta$.  
For any two points in a convex domain $D$, there exists a complex geodesic which through 
those (see \cite{Lem81} and \cite{RW83}).  

Let $y \in D \setminus \overline{E_{x}^{<(n-1)}}$.  
Let $\varphi: \Delta \to D$ be a complex geodesic which through 
$x$ and $y$ such that $\varphi(0) = x$.  
Then there exists Lempert's projection $\rho$, that is, 
a holomorphic retraction $\rho: D \to \varphi(\Delta) \subset D$ such that 
$\rho$ is the identity map on $\varphi(\Delta)$ and 
$\rho^{-1}(z)$ is the intersection of $D$ and the complex affine hyperplane which is tangent to 
$\overline{B}_{D}(x, k_{D}(x, z))$ for $z \in \varphi(\Delta)$ (see \cite{Lem81} and \cite{RW83}).  
Let $W \subset D \setminus \overline{E_{x}^{<(n-1)}}$ be a small open neighborhood of $y$ such that 
$\rho^{-1}(z) \cap W$ is connected for any $z \in \varphi(\Delta) \cap W$.  
Let $S_{D}(x, t)$ be the Kobayashi sphere of radius $t > 0$ centered at x.  
We show that 
\[
\rho^{-1}(z) \cap W \subset S_{D}(x, k_{D}(x, z)) 
\]
for any $z \in \varphi(\Delta) \cap W$.  
Since $\rho^{-1}(z) \cap W \cap S_{D}(x, k_{D}(x, z))$ is 
non-empty and closed in $\rho^{-1}(z) \cap W$, 
it is enough to show that $\rho^{-1}(z) \cap W \cap S_{D}(x, k_{D}(x, z))$ is open in 
$\rho^{-1}(z) \cap W$.  
Let $\gamma \in \rho^{-1}(z) \cap W \cap S_{D}(x, k_{D}(x, z))$.  
We have $k_{D}(x, z) = k_{D}(x, \gamma)$.  
There exists a complex $(n-1)$-dimensional convex set 
$C \subset S_{D}(x, k_{D}(x, z))$ which contains $\gamma$ in its relative interior (see Remark~\ref{remark:1}).  
Note that $\varphi$ is a biholomorphic map from $\Delta$ to $\varphi(\Delta)$.  
By the distance decreasing property of the Kobayashi distance, 
$\varphi^{-1}(\rho(C)) \subset \{\zeta \in \mathbb{C}; 
|\zeta| \leq \tanh k_{D}(x, z)\}$.  
Since $|\varphi^{-1}(\rho(\gamma))| = \tanh k_{D}(x, z)$, 
the maximum principle implies 
that $\varphi^{-1}(\rho(C))$ is a point and $C$ is contained in 
$\rho^{-1}(z) \cap S_{D}(x, k_{D}(x, z))$.  
Hence $\rho^{-1}(z) \cap W \cap S_{D}(x, k_{D}(x, z))$ is open in 
$\rho^{-1}(z) \cap W$ and $\rho^{-1}(z) \cap W \subset S_{D}(x, k_{D}(x, z))$.  
Then $K_{D, x}$ is constant on $\rho^{-1}(z) \cap W$.  
Therefore 
$K_{D, x}(y) = K_{D, x}(\rho(y))$ for $y \in W$.  
Since $K_{D, x}$ is harmonic on $\varphi(\Delta) \setminus \{x\}$, $K_{D, x} \circ \rho$ is pluriharmonic and 
$dd^{c}K_{D, x} = 0$ in $W$.  
This completes the proof of (ii).  
\end{proof}

\section{The case of a convex balanced domain}
Let $D \subset \mathbb{C}^{n}$ be a bounded convex balanced domain, that is, 
let $D$ be a bounded convex domain such that 
$\lambda z \in D$ for any $z \in D$ and $\lambda \in \mathbb{C}$, $|\lambda| \leq 1$.  
Let $\delta:\mathbb{C}^{n} \to [0, \infty)$ be the Minkowski function of $D$.  
Then it is known that $\log \delta \in \mathrm{Psh}(D)$ and $\log \delta$ is the 
pluricomplex Green function of $D$ with logarithmic pole at $\{0\}$.  

\begin{proposition}\label{proposition:1}
Let $D$ be a bounded convex balanced domain in $\mathbb{C}^{n}$.  
Then $\overline{E_{0}^{<k}} \cap D = |(dd^{c} \log \delta)^{n-k}|$ for $1 \leq k \leq n-1$.  
\end{proposition}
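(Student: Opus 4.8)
The plan is to combine the two-sided estimate already furnished by Theorem~\ref{theorem:1} with an argument exploiting the $\mathbb{C}^{*}$-scaling symmetry of a balanced domain. Since $\log\delta = K_{D,0}$ by the discussion preceding the statement, Theorem~\ref{theorem:1} gives at once the inclusion $\overline{E_{0}^{<k}} \cap D \subset |(dd^{c}\log\delta)^{n-k}|$ for $1 \le k \le n-1$ (from part (i) when $k \le n-2$ and from part (ii) when $k = n-1$). Thus the whole problem reduces to the reverse inclusion, equivalently to showing that $(dd^{c}\log\delta)^{n-k} = 0$ on the open set $\Omega = D \setminus \overline{E_{0}^{<k}}$.

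To prove this I would appeal to Proposition~\ref{proposition:b} with $u = \log\delta$ and exponent $n-k$, which requires producing, through every point of $\Omega$, a complex $(k+1)$-dimensional locally closed submanifold without boundary on which $\log\delta$ is pluriharmonic. Fix $z \in \Omega$; note $z \neq 0$, since the Kobayashi ball of radius $0$ is $\{0\}$ and hence $0 \in E^{0} \subset E_{0}^{<k}$. By the characterization of $D \setminus E_{0}^{<k}$ recorded before Remark~\ref{remark:1}, together with that remark, there is a complex $k$-dimensional convex set $C_{z}$ having $z$ in its relative interior and contained in the level set $\{\delta = \delta(z)\}$, so $\delta \equiv \delta(z)$ on $C_{z}$. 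Let $A = \mathrm{aff}_{\mathbb{C}}(C_{z})$ be its complex $k$-dimensional affine hull, with direction $A_{0}$.

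The key geometric point, and the step I expect to carry the argument, is that the origin cannot lie in $A$. Indeed, if $0 \in A$, then $A$ is a complex linear subspace of dimension $k$, and $A \cap D$ is a bounded convex balanced domain in $A$ with Minkowski function $\delta|_{A}$; the level set $\{\delta|_{A} = \delta(z)\}$ is then the sphere of a norm on the real $2k$-dimensional space $A$, hence of real dimension $2k-1$. But $C_{z}$ has affine hull $A$ and nonempty relative interior, so it is a set of real dimension $2k$, and it cannot be contained in the $(2k-1)$-dimensional sphere. This contradiction forces $0 \notin A$, equivalently $z \notin A_{0}$.

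With $0 \notin A$ in hand, I would thicken $C_{z}$ in the radial complex direction. Consider the holomorphic map $F(w, \lambda) = \lambda w$ defined for $w$ in the relative interior of $C_{z}$ and $\lambda$ in a small disk around $1$. Its differential at $(z, 1)$ has image $A_{0} + \mathbb{C}z$, which is $(k+1)$-dimensional precisely because $z \notin A_{0}$; hence, after shrinking, $F$ is an embedding onto a complex $(k+1)$-dimensional locally closed submanifold $N \subset \Omega$ containing $z$. Because $D$ is balanced, $\delta(\lambda w) = |\lambda|\,\delta(w) = |\lambda|\,\delta(z)$ on $N$, so in the holomorphic coordinates $(w, \lambda)$ one has $\log\delta|_{N} = \log|\lambda| + \log\delta(z) = \mathrm{Re}\,\log\lambda + \mathrm{const}$, which is pluriharmonic. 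Proposition~\ref{proposition:b} then yields $(dd^{c}\log\delta)^{n-k} = 0$ on $\Omega$, i.e. $|(dd^{c}\log\delta)^{n-k}| \subset \overline{E_{0}^{<k}} \cap D$, and combining this with the first inclusion completes the proof for all $1 \le k \le n-1$.
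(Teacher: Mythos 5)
Your proof is correct, but it follows a different route from the paper's own proof. The paper disposes of Proposition~\ref{proposition:1} in two lines: Theorem~\ref{theorem:1} combined with Remark~\ref{remark:2} gives $|d\log \delta \wedge d^{c} \log \delta \wedge (dd^{c}\log \delta)^{n-k}| \subset \overline{E_{0}^{<k}} \cap D \subset |(dd^{c} \log \delta)^{n-k}|$, and then the support identity $|(dd^{c} \log \delta)^{n-k}| = |d\log \delta \wedge d^{c} \log \delta \wedge (dd^{c}\log \delta)^{n-k}|$ is imported from the argument of Proposition~2 of \cite{Tib}, which exploits the same homogeneity $\delta(\lambda z)=|\lambda|\delta(z)$ analytically. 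What you carry out is precisely the alternative the author only flags in the remark after the proposition (``We can also derive Proposition~\ref{proposition:1} from Proposition~\ref{proposition:b} and Theorem~\ref{theorem:1}''), and you supply the two nontrivial ingredients that remark leaves unsaid: first, that the complex $k$-dimensional affine hull $A$ of the face $C_{z}$ cannot pass through the origin --- your argument is sound, since $C_{z}$ has nonempty interior in $A$ while the norm-sphere $\{\delta|_{A}=\delta(z)\}$ has empty interior in $A$ (for $w$ in the sphere, $tw \to w$ as $t \to 1$ with $\delta(tw)=t\delta(z)\neq\delta(z)$); and second, the radial thickening $F(w,\lambda)=\lambda w$, which is an immersion at $(z,1)$ exactly because $z \notin A_{0}$, on whose image $\log\delta = \mathrm{Re}\log\lambda + \mathrm{const}$ by homogeneity, so that Proposition~\ref{proposition:b} with exponent $n-k$ applies on the open set $D\setminus\overline{E_{0}^{<k}}$ (shrinking $N$ so it lies in that set is harmless, and your observation that $0 \in E_{0}^{<k}$ correctly keeps the pole out of play). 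The trade-off: the paper's proof is shorter and yields the additional identity of supports with the radial factor $d\log\delta\wedge d^{c}\log\delta$, but leans on the external preprint \cite{Tib}; yours is geometric and entirely self-contained within the tools proved in this paper.
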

\begin{proof}
By Theorem~\ref{theorem:1} and Remark~\ref{remark:2}, 
it follows that 
\[
|d\log \delta \wedge d^{c} \log \delta \wedge (dd^{c}\log \delta)^{n-k}| 
\subset \overline{E_{0}^{<k}} \cap D \subset |(dd^{c} \log \delta)^{n-k}|  
\]
for $1 \leq k \leq n-1$.  
By the same argument as that used on the proof of Proposition~2 of \cite{Tib}, 
$|(dd^{c} \log \delta)^{n-k}| = |d\log \delta \wedge d^{c} \log \delta \wedge (dd^{c}\log \delta)^{n-k}|$.  
This completes the proof.  
\end{proof}
\begin{remark}
We can also derive Proposition~\ref{proposition:1} from Proposition~\ref{proposition:b} and Theorem~\ref{theorem:1}.  
\end{remark}
Let $A(\overline{D})$ be the uniform algebra consists of all continuous functions on 
$\overline{D}$ which can be approximated uniformly on $\overline{D}$ 
by continuous functions which are 
holomorphic on $D$.  
Let $\partial_{S}\overline{D}$ denote the Shilov boundary of 
$\overline{D}$ for $A(\overline{D})$.  
Note that, if $D$ is a bounded convex domain, 
$A(\overline{D})$ is equal to the uniform algebra consists of all continuous functions 
on $\overline{D}$ which can be approximated uniformly on $\overline{D}$ by continuous functions 
which are holomorphic on a neighborhood of $\overline{D}$.  
If $D$ is a bounded convex balanced domain, 
$|(dd^{c} \log \delta)^{n-1}|$ is equal to 
$\bigcup_{0 \leq r < 1} r \partial_{S}\overline{D}$ by Proposition~2 of \cite{Tib}.  
Hence Proposition~\ref{proposition:1} implies the fact that, in a complex normed vector space with finite dimension, the closure of the set of the complex extreme points of 
the closed unit ball is equal to the Shilov boundary of it.  

\begin{example}
Let $P = \{z \in \mathbb{C}^{n}; |z_{1}| <1, \ldots, |z_{n}|<1\}$ be a polydisk and 
let $\delta (z) = \max\{|z_{1}|, \ldots, |z_{n}|\}$ be the Minkowski function of $P$.  
Then $|(dd^{c}\log \delta)^{k}|$ ($1 \leq i \leq n-1$) is the set of points $z \in P$ where there exist positive integers 
$1 \leq i(1) < \cdots < i(k+1) \leq n$ such that $|z_{i(1)}| = \cdots = |z_{i(k+1)}| = \max\{|z_{1}|, \ldots, |z_{n}|\}$.  
\end{example}

\begin{example}
Let $1 < n_{1} < n_{2}$ be positive integers and 
let $B_{j}$ be the Euclidean unit ball in $\mathbb{C}^{n_{j}}$ ($j = 1, 2$).  
Let $D = B_{1} \times B_{2}$ and 
let 
\[
\delta(z, w) = \max\left\{\sqrt{|z_{1}|^{2} + \cdots |z_{n_{1}}|^{2}}, \sqrt{|w_{1}|^{2} + \cdots |w_{n_{2}}|^{2}}\right\}
\quad (z \in B_{1}, w \in B_{2})
\]
be the Minkowski function of $D$.  
Then 
\begin{align*}
& |(dd^{c}\log\delta)| = \cdots = |(dd^{c}\log\delta)^{n_{1}-1}|  = D, \\
& |(dd^{c}\log\delta)^{n_{1}}| = \cdots = |(dd^{c}\log\delta)^{n_{2}-1}| = \{(z, w) \in B_{1} \times B_{2}; \|z\| \leq \|w\|\}, \quad \text{and} \\
& |(dd^{c}\log\delta)^{n_{2}}| = \cdots = |(dd^{c}\log\delta)^{n_{1} + n_{2} -1}| = \{(z, w) \in B_{1} \times B_{2}; \|z\| = \|w\|\}.  
\end{align*}
\end{example}

\par\noindent{\scshape \small
Department of Mathematics, \\
Tokyo Institute of Technology \\
Oh-Okayama, Meguro, Tokyo (Japan) }
\par\noindent{\ttfamily tiba.y.aa@m.titech.ac.jp}
\end{document}